    \newtheorem{theorem}{Theorem}[section]
    \newtheorem{lemma}{Lemma}[section]
    \newtheorem{definition}{Definition}[section]
    \newtheorem{corollary}{Corollary}[section]
    \numberwithin{equation}{section}
    \newenvironment{remark}[1][Remark.]{\begin{trivlist}
    \item[\hskip \labelsep {\bfseries #1}]}{\end{trivlist}}
    \numberwithin{equation}{section}
    \numberwithin{figure}{section}
    \newcommand{\R}{\mathbb{R}}
    \newcommand{\Var}{\mathrm{Var}}
    \newcommand{\Cov}{\mathrm{Cov}}
    \newcommand{\note}[1]
    {$^{(!)}$\marginpar[{\hfill\tiny{\sf{#1}}}]{\tiny{\sf{(!) #1}}}}
    \newcommand{\newchapter}[3] 
	{                           
        \chapter[#2]{#3}
        \chaptermark{#1}
        \thispagestyle{myheadings}
	}
\begin{document}

\begin{titlepage}
\title{Extrema of locally stationary Gaussian fields on growing manifolds\\[20pt]}
%
\author{Wanli Qiao\\
    Department of Statistics\\
     University of California\\
    One Shields Ave.\\
    Davis, CA 95616-8705\\
    USA
\and
    Wolfgang Polonik\\
    Department of Statistics\\
    University of California\\
    One Shields Ave.\\
    Davis, CA 95616-8705\\
    USA}
\date{\today}
\maketitle
\vspace*{-0.8cm}

\begin{abstract} \noindent We consider a class of non-homogeneous, continuous, centered Gaussian random fields $\{X_h(t), t \in {\cal M}_h;\,0 < h \le 1\}$ where ${\cal M}_h$ denotes a rescaled smooth manifold, i.e. ${\cal M}_h = \frac{1}{h} {\cal M},$ and study the limit behavior of the extreme values of these Gaussian random fields when $h$ tends to zero, which means that the manifold is growing. Our main result can be thought of as a generalization of a classical result of Bickel and Rosenblatt (1973a), and also of results by Mikhaleva and Piterbarg (1997). \\[5pt]

\end{abstract}
\vfill
{\small This research was partially support by the NSF-grant DMS 1107206\\
{\em AMS 2000 subject classifications.} Primary 60G70, 60G15.\\
{\em Keywords and phrases.} Local stationarity, extreme values, triangulation of manifolds}

\end{titlepage}
\section{Introduction} \label{intro}
Extreme value behavior of Gaussian processes is an important topic in probability theory and a crucial ingredient to many statistical inference procedures. See for instance Chernozhukov et al.  (2014) for a very recent, general  contribution to this topic. Here we are considering the extreme value behavior of Gaussian fields on manifolds, which plays an important role in statistical inference. In fact  there is recent growing interest in the statistical literature in inference for manifolds such as integral curves (Koltchinskii et al., 2007), ridges or filaments (Hall et al., 1992, Genovese et al., 2012a, 2014, Chen et al., 2013, 2014a, 2014b, Qiao and Polonik, 2015a), level sets (or level curves) (Lindgren et al., 1995, Cuevas et al., 2006, Chen et al., 2015, Qiao and Polonik,  2015b), or the boundaries of the support of a probability density function (Cuevas et al., 2004, Biau et al., 2008). In applications these objects correspond to various types of geometric objects in geoscience (fault lines), astronomy (cosmic web) or neuroscience (fibers). \\

The derivation of asymptotic distributional results for such types of geometric objects often poses interesting technical challenges. In the case of density ridges in ${\mathbb R}^2$, Qiao and Polonik (2015a) were able to derive such a distributional result for a smoothing based plug-in estimator. Note that density ridges in ${\mathbb R}^2$ can be considered as curves indexed by a parameter, and the distributional result in Qiao and Polonik (2015a) is {\em uniform} over the parameter. Because of the pointwise asymptotic normality of the estimator and the uniform nature of the result, the extreme value behavior of a Gaussian process on growing (rescaled) manifold comes into play there. The corresponding result needed in Qiao and Polonik (2015a) is proved here. In fact we derive a much more general result, allowing for arbitrary dimension and more general non-stationarity.\\

Also in other statistical literature the construction of uniform confidence bands for a target quantity rely on the asymptotic excursion probability of Gaussian processes or fields on growing sets. See, for instance, Bickel and Rosenblatt (1973a),  Konakov and Piterbarg (1984), Rio (1994), Gin\'{e} et al. (2003), Sharpnack and Arias-Castro (2014).  In this literature, the goal becomes to find the  excursion probability of Gaussian random processes or fields over growing sets in the form of 
\begin{align}\label{GeneralForm}
\lim_{h\rightarrow 0}\mathbb{P}\left(a_h\left(\sup_{x\in\mathbf{E}/h}X_h(x) - b_h\right)\leq u\right),\quad u \in {\mathbb R},
\end{align}
where $X_h$ for each $h$ is a Gaussian process or field, and $\mathbf{E}$ is original set on which the estimation is constrained, and $\mathbf{E}/h = \big\{z: hz \in \mathbf{E}  \big\}$ and $h \in {\mathbb R}$ is a parameter. The quantities $a_h$ and $b_h$ need to be determined so that this limit is non-degenerate. \\

For instance, Bickel and Rosenblatt (1973a) derive the asymptotic distribution of the quantity $\sup_{x \in [0,1]}|\widehat{f}_n(x) - f(x)|,$ where $\widehat{f}_n$ is a kernel density estimator based on a sample of $n$ independent observations from $f$. In this case, $\mathbf{E}$ is a compact set $[0,1]$, and the parameter $h$ is the bandwidth of the kernel density estimator, so that $\mathbf{E}/h = [0,\frac{1}{h}]$ which grows to the positive real line as $h \to 0.$ The case of a multivariate kernel density estimator was treated later in Rosenblatt (1976), where $\mathbf{E} = [0,1]^d$ and thus $\mathbf{E}/h = [0,1/h]^d$, growing to the positive quadrant for $h \to 0$. The corresponding derivations rely heavily on an approximation of  $\widehat{f}_n(x) - f(x), x \in [0,1],$ by Gaussian processes. A similar idea underlies the derivations in Qiao and Polonik (2015a). There, however, the processes in question are more complex. The set $\mathbf{E}$ is a certain manifold (ridge line) and a uniform nonparametric confidence region of $\mathbf{E}$ itself is of interest. It turns out that in order to achieve this, the main task is to find the limit of (\ref{GeneralForm}) with $\mathbf{E}$ a smooth manifold, and this  type of problem is considered below in a general set-up.\\


From the perspective of probability literature, excursion probability of Gaussian processes and fields is a classical and important topic. For the case of $\mathbf{E}$ being an  interval or a (hyper-) cube, the asymptotic distribution in (\ref{GeneralForm}) was studied in Pickands (1969a), Berman (1982), Leadbetter et al. (1983), Seleznjev (1991), Berman (1992), Seleznjev (1996), H\"{u}sler (1999), H\"{u}sler et al. (2003), Seleznjev (2006), Tan et al. (2012), and Tan (2015). In this literature the Hausdorff dimension of $\mathbf{E}$ is the same as the one of the ambient space for $x$.\\

Excursion probability for Gaussian random fields over some more general (but fixed) parameter set is a classical subject and widely studied, e.g. Adler (2000), Adler and Taylor (2007), Aza\"{i}s and Wschebor (2009). In particular, in Piterbarg (1996), Mikhaleva and Piterbarg (1997), and Piterbarg and Stamatovich (2001) excursion probabilities of Gaussian random fields over fixed manifolds can be found. In a recent work the excursion probability of Gaussian fields with some (local) isotropic properties indexed on a manifold is revisited in Cheng (2015).\\
 
This paper is deriving a result of type (\ref{GeneralForm}), where the underlying process $\{X_h(t), t \in {\cal M}_h;\,0 < h \le 1\}$ is a non-homogeneous, continuous, centered Gaussian random field, and ${\cal M}_h$ denotes a rescaled smooth, compact manifold. Our main result can be considered as a generalization of the classical Bickel and Rosenblatt result discussed above, as well as of a result by Mikhaleva and Piterbarg (1997) who considered a {\em fixed} manifold. Our proof combines ideas from both Bickel and Rosenblatt (1973a) and Mikhaleva and Piterbarg (1997).\\

The detailed set-up is as follows. Let $r,n\in\mathbb{Z}^+$ with $n\geq 2$ and $1\leq r< n$. Let $\mathcal{H}_1\subset\mathbb{R}^n$ be a compact set and $\mathcal {M}_1\subset\mathcal{H}_1$ be a $r$-dimensional Riemannian manifold with ``bounded curvature'', the explicit meaning of which will be made clear later. For $0 < h \le 1$ let $\mathcal{H}_h:=\{t:ht\in\mathcal{H}_1\}$ and $\mathcal {M}_h:=\{t:ht\in\mathcal{M}_1\}$. Further let
\begin{align}\label{proc}
\{X_h(t), t \in {\cal M}_h;\,0 < h \le 1\}
\end{align}
denote a class of non-homogeneous, continuous, centered Gaussian fields indexed by ${\cal M}_h, 0 < h \le 1.$ Our goal is to derive conditions assuring that for each $z > 0$ we can construct $\theta_h(z)$ with
\begin{align*}
\lim_{h\rightarrow0}\mathbb{P}\Big\{\sup_{t\in\mathcal {M}_h}|X_h(t)|\leq \theta_h(z) \Big\}=\exp\{-2\exp\{-z\}\}.
\end{align*}
%

\begin{section}{Main Result}\label{MainResult}

First we define the notion of local stationarity used here. The first definition can be found in Mikhaleva and Piterbarg (1997), for instance.\\[-5pt]

\begin{definition}[Local $(\alpha, D_t)$-stationarity] A non-homogeneous random field $X(t),t\in \mathcal {S}\subset\mathbb{R}^n$  is locally $(\alpha, D_t)$-stationary, if the covariance function $r(t_1,t_2)$ of $X(t)$ satisfies the following property. For any $s\in \mathcal {S}$ there exists a non-degenerate matrix $D_s$ such that for any $\epsilon>0$ there exists a positive $\delta(\epsilon)$ with
\begin{align*}
1-(1+\epsilon)\|D_s(t_1-t_2)\|^\alpha\leq r(t_1,t_2)\leq1-(1-\epsilon)\|D_s(t_1-t_2)\|^\alpha
\end{align*}
for $\|t_1-s\|<\delta(\epsilon)$ and $\|t_2-s\|<\delta(\epsilon)$.
\end{definition}
Observe that this definition in particular says that ${\rm Var}(X(t)) = 1$ for all $t$. Since here we are considering random fields indexed by $h$ and study their behavior as $h \to 0$, we will need local $(\alpha, D_t)$-stationarity to hold in a certain sense uniformly in $h$. The following definition makes this precise.

\begin{definition}[Local equi-$(\alpha, D_t)$-stationarity] Consider a class of non-homogeneous random fields $X_h(t),t\in \mathcal {S}_h\subset\mathbb{R}^n$ indexed by $h\in \mathbb{H}$ where $\mathbb{H}$ is an index set. We say $X_h(t)$ is locally equi-$(\alpha, D_t^h)$-stationary, if the covariance function $r_h(t_1,t_2)$ of $X_h(t)$ satisfies the following property. For any $s\in \mathcal {S}_h$ there exists a non-degenerate matrix $D_s^h$ such that for any $\epsilon>0$ there exists a positive $\delta(\epsilon)$ independent of $h$ such that 
\begin{align*}
1-(1+\epsilon)\|D_s^h(t_1-t_2)\|^\alpha\leq r_h(t_1,t_2)\leq1-(1-\epsilon)\|D_s^h(t_1-t_2)\|^\alpha
\end{align*}
for $\|t_1-s\|<\delta(\epsilon)$ and $\|t_2-s\|<\delta(\epsilon)$.
\end{definition}

An example for such a class of Gaussian random fields (with $n = \alpha = 2$) is provided by the fields introduced in Qiao and Polonik (2015a) - see (\ref{GaussianExample}) below. \\ 

We also need the concept of a condition number of a manifold (see also Genovese et al., 2012b). For an $r$-dimensional smooth manifold $\mathcal{M}$ embedded in $\mathbb{R}^n$ let $\Delta(\mathcal{M})$ be the largest $\lambda$ such that each point in $\mathcal{M}\oplus \lambda$ has a unique projection onto $\mathcal{M}$, where $\mathcal{M}\oplus \lambda$ denotes the $\lambda$-enlarged set of $\mathcal{M}$, i.e. the union of all open balls of radius $\lambda$ and midpoint in ${\mathcal M}$. $\Delta(\mathcal{M})$ is called \textit{condition number} of $\mathcal{M}$ in some literature. A compact manifold embedded in a Euclidean space has a positive condition number, see de Laat (2011), and references therein. A positive $\Delta(\mathcal{M})$ indicates a ``bounded curvature'' of $\mathcal{M}$. As indicated in Lemma 3 of Genovese et al. (2012b), on a manifold with a positive condition number, small Euclidean distance implies small geodesic distance.\\


Now we state the main theorem of this section. It is an result about the asymptotic behavior of the extreme values of locally equi-$(\alpha, D_t)$-stationary continuous Gaussian random fields indexed by a parameter $h$ as $h \to 0$.  As indicated above, our result generalizes Theorem \ref{Piterbarg} in Piterbarg and Stamatovich (2001) and Theorem A1 in Bickel and Rosenblatt (1973a). \\

For an  $n \times r$ matrix $G$ we denote by $\|G\|^2_r$ the sum of squares of all minors of order $r$, and $H_{\alpha}^r$ denotes the generalized Pickands constant (see section~\ref{misc} for a definition). At each $u\in\mathcal{M}$ let $T_u\mathcal{M}$ denote the tangent space at $u$ to $\mathcal{M}$ and let $T_u^\perp\mathcal{M}$ be the normal space, which is a $n-r$ dimensional hyperplane.
\\

\begin{theorem}\label{ProbMain}
Let $\mathcal{H}_1\subset\mathbb{R}^n$ be a compact set and $\mathcal{H}_h:=\{t:ht\in\mathcal{H}_1\}$ for $0<h\leq1$. Let $\{X_h(t), t\in \mathcal{H}_h, 0<h\leq1\}$ be a class of Gaussian centered locally equi-$(\alpha, D_t^h)$-stationary fields with $D_t^h$ continuous in $h\in (0, 1]$ and $t\in \mathcal{H}_h$. Let $\mathcal {M}_1\subset\mathcal{H}_1$ be a $r$-dimensional compact Riemannian manifold with $\Delta(\mathcal{M}_1)>0$ and $\mathcal {M}_h:=\{t:ht\in\mathcal{M}_1\}$ for $0<h\leq1$. Suppose that $\lim_{h\rightarrow0,ht=t^*}D_t^h=D_{t^*}^0$ uniformly in $t^*\in \mathcal{H}_1$, where all the components of $D_{t^*}^0$ are continuous and uniformly bounded in $t^*\in \mathcal{H}_1$. Further assume the existence of positive constants $C$ and $C^\prime$ such that
\begin{align}\label{CCprime}
0<C\leq \inf\limits_{\substack{0<h\leq 1, hs\in \mathcal{H}_1 \\ t\in \mathbb{R}^n\backslash \{0\}}} \frac{\|D_s^h\;t\|^\alpha}{\|t\|^\alpha}\leq \sup\limits_{\substack{0<h\leq 1, hs\in \mathcal{H}_1 \\ t\in \mathbb{R}^n\backslash \{0\}}} \frac{\|D_s^h\;t\|^\alpha}{\|t\|^\alpha}\leq C^\prime<\infty.
\end{align}
%
%
%
%
%
%
For any $\delta>0$, define
\begin{align*}
Q(\delta):=\sup_{0<h\leq 1}\{|r_h(x+y,y)|: x+y\in\mathcal {M}_h, y\in\mathcal {M}_h, \|x\|>\delta\}
\end{align*}
where $r_h$ is the covariance function of $X_h(t)$. Suppose for any $\delta>0$, there exists a positive number $\eta$ such that
\begin{align}\label{SupGauss1}
Q(\delta) < \eta<1,
\end{align}
In addition, assume that there exist a function $v(\cdot)$ and a value $\delta_0>0$ such that for any $\delta>\delta_0$
\begin{align}\label{SupGauss2}
Q(\delta)\Big|[\log(\delta)]^{2r/\alpha}\Big|\leq v(\delta).
\end{align}
where $v$ is a monotonically decreasing function with $v(a^p)=O(v(a))=o(1)$ and $a^{-p}=o(v(a))$ as $a\rightarrow\infty$ for any $p>0$. For any fixed $z$, define
\begin{align}\label{ThetaExp}
\theta \equiv \theta(z) &=\sqrt{2r\log{h^{-1}}}+\frac{1}{\sqrt{2r\log{h^{-1}}}}\bigg[z+\Big(\frac{r}{\alpha}-\frac{1}{2}\Big)\log{\log{h^{-1}}}\nonumber\\
&\hspace{3cm}+\log\bigg\{\frac{(2r)^{r/\alpha-1/2}}{\sqrt{2\pi}}H_\alpha^{(r)}\int_{\mathcal {M}_1}\|D_s^0 M_s^1\|_rds\bigg\}\bigg],
\end{align}
where $M_s^1$ is a $n\times r$ matrix with orthonormal columns spanning $T_s\mathcal {M}_1.$ Then
\begin{align*}
\lim_{h\rightarrow0}\mathbb{P}\Big\{\sup_{t\in\mathcal {M}_h}|X_h(t)|\leq \theta \Big\}=\exp\{-2\exp\{-z\}\}.
\end{align*}
\end{theorem}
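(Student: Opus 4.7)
The plan is to follow the classical Pickands--Piterbarg double-sum method, adapted to the growing manifold $\mathcal{M}_h$ via the triangulation argument of Mikhaleva and Piterbarg (1997), with an additional uniform-in-$h$ layer because the field, the matrix $D_t^h$, and the manifold itself all vary with $h$. First I would reduce the two-sided supremum to a one-sided one: since $X_h$ is centered Gaussian, on the high level $\theta$ the events $\{\sup X_h>\theta\}$ and $\{\sup(-X_h)>\theta\}$ should be asymptotically independent, which can be established by a Berman-type normal comparison bound combined with (\ref{SupGauss1})--(\ref{SupGauss2}). Consequently $\mathbb{P}\{\sup|X_h|\le\theta\}-\mathbb{P}\{\sup X_h\le\theta\}^{2}\to 0$, which produces the factor $2$ in the Gumbel exponent.

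Next I would triangulate the fixed manifold $\mathcal{M}_1$ by small simplices whose vertices lie on $\mathcal{M}_1$ and whose tangent-space projections onto $T_s\mathcal{M}_1$ at a base point $s$ have diameter at most some fixed $\varepsilon$. Rescaling by $1/h$ yields a partition of $\mathcal{M}_h$ into patches of diameter $\varepsilon/h$; within each patch the deviation of $\mathcal{M}_h$ from its tangent plane is controlled by the condition number $\Delta(\mathcal{M}_1)>0$. On each such patch I would further discretise into cells at the Pickands length scale $(\log h^{-1})^{-2/\alpha}$, the scale at which local equi-$(\alpha,D_t^h)$-stationarity becomes visible. Using that $D_t^h\to D_{t^{*}}^0$ uniformly and that (\ref{CCprime}) provides two-sided bounds on $\|D_s^h t\|/\|t\|$, I can compare the field on each cell with an $(\alpha,D_s^0)$-self-similar stationary Gaussian field and apply a Pickands-type lemma in $r$ dimensions. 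This produces a local exceedance probability of the form $H_\alpha^{(r)}\,\|D_s^0 M_s^1\|_r$ multiplied by the tangent-plane cell area and $\theta^{2r/\alpha}\Psi(\theta)$, where $\Psi$ is the standard Gaussian tail; the factor $\|\cdot\|_r$ is exactly the $r$-dimensional Jacobian that converts $n$-dimensional parameter volume to surface measure on the rescaled tangent plane spanned by the columns of $M_s^1$.

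Summing over all cells in a patch and over all patches, the geometric factors combine via a Riemann-sum argument (after undoing the rescaling, which contributes the factor $h^{-r}$ that gets absorbed into $\theta$) to $\int_{\mathcal{M}_1}\|D_s^0 M_s^1\|_r\,ds$, so that the expected number of exceedance clusters becomes exactly $e^{-z}$ for the $\theta(z)$ defined in (\ref{ThetaExp}). A Poisson clumping / Bonferroni argument then upgrades this first-moment calculation to the Gumbel limit $\exp\{-e^{-z}\}$ for the one-sided supremum: short-range clustering within adjacent patches is handled by Pickands' double-sum bound, while long-range independence between non-adjacent patches follows from $Q(\delta)<\eta<1$ together with the decay in (\ref{SupGauss2}), the hypotheses on $v$ being precisely what is required to kill the long-range double sum over pairs of cells at distance $\gtrsim \delta/h$.

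The main obstacle will be controlling remainder terms uniformly in $h$ while simultaneously managing the curvature of $\mathcal{M}_h$ and the $h$-dependence of the covariance: the Pickands constants must be expressed through the limit $D_s^0$ rather than $D_s^h$, so the approximation error in that replacement has to be quantitatively smaller than the relevant length scale $(\log h^{-1})^{-1/\alpha}$; and the simplex size $\varepsilon$ must be chosen so that the manifold is nearly flat on each patch (an $O(\varepsilon^{2})$ error enforced by $\Delta(\mathcal{M}_1)>0$) while the total number of patches does not inflate the Bonferroni bound. Tracking this balance, and verifying that (\ref{SupGauss2}) indeed suppresses the long-range double sum with room to spare, will be the most delicate part of the argument.
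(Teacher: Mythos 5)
Your overall strategy — triangulate, project locally onto tangent planes, apply a Pickands-type lemma to obtain a local exceedance density $H_\alpha^{(r)}\|D_s^0 M_s^1\|_r\,\theta^{2r/\alpha}\Psi(\theta)$, convert the cell sum into the integral over $\mathcal{M}_1$ by a Riemann-sum argument, and then upgrade the first-moment calculation to a Gumbel limit via block independence and a double-sum bound — is the right one and is essentially what the paper does. However, two of your steps do not quite close as stated.

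First, your opening reduction to a one-sided supremum is a genuinely different route from the paper's, and it is not a freebie. You claim $\mathbb{P}\{\sup|X_h|\le\theta\}-\mathbb{P}\{\sup X_h\le\theta\}^{2}\to 0$ by ``asymptotic independence'' of $\{\sup X_h>\theta\}$ and $\{\sup(-X_h)>\theta\}$. Since $X_h(t)$ and $-X_h(t)$ are perfectly anti-correlated at every $t$, this is not a normal-comparison statement about a pair of weakly correlated fields; it is the Berman (1971)--type assertion that the \emph{locations} of high and low excursions asymptotically decouple. That is true in the end but is itself a theorem, roughly as hard as the block-independence step you still have to do. The paper avoids this entirely: it keeps $|X_h|$ throughout, and the factor $2$ enters only at the block level, where block-exceedance probabilities are $O(h^r)\to 0$ and therefore $\mathbb{P}(\sup_{J_k}|X_h|>\theta)=\mathbb{P}(\sup_{J_k}X_h>\theta)+\mathbb{P}(\inf_{J_k}X_h<-\theta)-o(\cdot)\approx 2\,\mathbb{P}(\sup_{J_k}X_h>\theta)$ by symmetry and a Bonferroni bound. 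You can pursue your route, but you must supply the Berman argument; do not present it as a corollary of \eqref{SupGauss1}--\eqref{SupGauss2}.

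Second, and more seriously, your blocking scheme is at the wrong scale and is missing a separation device. Triangulating $\mathcal{M}_1$ into finitely many simplices of diameter $\varepsilon$ and rescaling gives $O(\varepsilon^{-r})$ patches of diameter $\varepsilon/h$ on $\mathcal{M}_h$, i.e.\ a \emph{fixed} number of blocks whose individual exceedance probabilities do not vanish; no Poisson/exponential approximation can kick in with $O(1)$ blocks. What is needed is the opposite: an $\ell^*$-net on $\mathcal{M}_h$ itself with $\ell^*$ a fixed small constant, giving $m_h=O((h\ell^*)^{-r})\to\infty$ blocks $J_{k,m_h}$, each of bounded volume, so that each block's exceedance probability is $O(h^r)$ and $\sum_k\mathbb{P}(\cdot)$ is bounded. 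Moreover, to replace the joint law of $\max_k$ over the blocks by a product measure you need Berman's comparison inequality (Lemma A4 of Bickel and Rosenblatt 1973a), and that requires the discretization points in distinct blocks to be uniformly separated by a positive distance so that \eqref{SupGauss1} makes the off-block correlations bounded away from $1$. Adjacent blocks share a boundary, so you cannot apply the comparison lemma to $J_k$ and $J_{k'}$ directly; the paper's device is to cut out a $\delta$-strip around all block boundaries, work with the separated large blocks $J_{k,m_h}^\delta$, and show separately (using the same first-moment bound) that the union of the cut-out strips contributes only $O(\delta)$ to the probability. Your proposal mentions ``long-range independence between non-adjacent patches'' but does not say how to treat pairs of blocks that touch; without the strip-removal step, the normal-comparison bound simply does not apply to such pairs, and the Poisson factorization does not follow. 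This is the key missing idea.

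Two minor points: the Pickands discretization scale is $\gamma\theta^{-2/\alpha}\asymp(\log h^{-1})^{-1/\alpha}$, not $(\log h^{-1})^{-2/\alpha}$; and you will additionally need the hypothesis that $D_t^h\to D_{t^*}^0$ uniformly together with continuity of $D^0$ to make the Riemann-sum step uniform in $h$, which is where \eqref{CCprime} earns its keep in the argument.
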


{\bf Remarks.}\hfill
\begin{enumerate}
\item Note that with (\ref{CCprime}), local equi-$(\alpha, D_t^h)$-stationarity is equivalent to
\begin{align*}
r_h(t_1,t_2)=1-\|D_s^h(t_1-t_2)\|^\alpha+o(\|t_1-t_2\|^\alpha) \quad \textrm{as} \quad \|t_1-t_2\|\rightarrow 0,
\end{align*}
uniformly for $t_1,t_2\in\mathcal{H}_h$ and uniformly in $h$.
\item An example of a function $v(\delta)$ satisfying the properties of $v(u)$ required in the theorem is given by $v(\delta) = \log(\delta)^{-\beta}$ for $\beta>0$. 
\item Qiao and Polonik (2015a) use a special case of the above theorem. In that paper a 1-dimensional growing manifold $\mathcal{M}_h$  embedded in $\mathbb{R}^2$ was considered. The Gaussian random field of interest there is
\begin{align}\label{GaussianExample}
U_h(x)=a_1(hx)\int \big(A_1(hx))^T  d^2   K(x-s) dW(s),
\end{align}
where $W$ is a 2-dimensional Wiener process,  $A_1: \mathbb{R}^2\mapsto \mathbb{R}^3$ and $a_1: \mathbb{R}^2\mapsto \mathbb{R}$ are smooth functions, $K: \mathbb{R}^2\mapsto \mathbb{R}$ is a smooth kernel density function with the unit ball in $\mathbb{R}^2$ as its support, and $d^2$ is an operator such that $d^2f(x) = \left( f^{(2,0)}(x),\,f^{(1,1)}(x),\,f^{(0,2)}(x)\right)^T$ for any twice differentiable function $f: \mathbb{R}^2 \mapsto \mathbb{R}$. It is shown in Qiao and Polonik (2015a) that the assumptions formulated in that paper insure that the processes $U_h(x)$ satisfy the assumptions of our main theorem in the special case of $r = 1$, $n= 2, \alpha = 2$, and $Q(\delta) = 0$ for $\delta > \delta_0$. This in particular means that the function $v(\delta) =  \log(\delta)^{-\beta}$ for $\beta>0$ works in this case. The fact that this special function $Q(\delta)$ can be used there follows from the assumption that the support of $K$ (and its second order partial derivatives) is bounded. This implies that the covariances of $U_h(x_1)$ and $U_h(x_2)$ become zero once the distance $\|x_1 - x_2\|$ exceeds a certain threshold. The derivation of the corresponding matrices $D_s$ can also be found in Qiao and Polonik (2015a).

%
\end{enumerate}
%

\section{Proof of Theorem~\ref{ProbMain}}\label{proof}
The following notation and definitions are used below. Given a set $U$ and a metric $d_U$ on $U$, a set $S\subset U$ is an $\epsilon$-net if for any $u\in U$, we have $\inf_{s\in S}d_U(s,u)\leq \epsilon$ and for any $s,t\in S$, we have $d_U(s,t)\geq \epsilon$. Let further $\phi(u)=\frac{1}{\sqrt{2\pi}}e^{-\frac{u^2}{2}},\,\Phi(u)=\int_{-\infty}^u\phi(v)dv$, $\bar\Phi(u)=1-\Phi(u)$ and $\Psi(u)=\phi(u)/u$. Let $\|\cdot\|$ be the Euclidean norm. We also let denote $V_r$ denote $r$-dimensional Hausdorff measure.\\

The proof is constructing various approximations to $\sup_{t\in\mathcal {M}_h}|X_h(t)|$ that will facilitate the control of the probability $\mathbb{P}(\sup_{t\in\mathcal {M}_h}|X_h(t)| \le \theta)$. Essentially the process $X_h(t)$ on the manifold is linearized by first approximating the manifold locally via tangent planes, and then defining an approximating process on these tangent planes. This idea underlying the proof is typical for deriving extreme value results for such processes (e.g. see H\"{u}sler et al. 2003). We begin with some preparations thereby outlining the main ideas of the proof.\\

{\bf (i)} {\em Partitioning ${\cal M}_h$:} We partition the manifold ${\cal M}_h$ as follows. Suppose that $V_r(\mathcal {M}_1) = \ell$ so that $V_r(\mathcal {M}_h) = \ell/h^r$. For a fixed $\ell^*< \ell$, there exists an $\ell^*$-net on $\mathcal{M}_h$ with respect to geodesic distance with cardinality of $O((h\ell^{*})^{-r})$. A Delaunay triangulation using the $\ell^*$-net results in a partition of $\mathcal {M}_h$ into $m_h=O((h\ell^{*})^{-r})$ disjoint pieces $\{J_{k,m_h}: k=1,2,\cdots,m_h\}.$ The construction is such that $\max_{k = 1,\ldots,m_h}V_r(J_{k,m_h})$, the norm of this partition, is $O({\ell^*}^r)$, uniformly in $h$. It is known that for any $r \in {\mathbb Z}^+$ with $r<n$ (and for $\ell^*$ small enough) such an $\ell^*$-net and a Delaunay triangulation exist for compact Riemannian manifolds (see e.g. de Laat 2011). (In the case of $r=1$, the construction just described simply amounts to choosing all the $O(1/h\ell^*)$ many sets $J_{k,m_h}$ as pieces on the curve ${\cal M}_h,$ which has length at most $\ell^*$.) One should point out that while $\ell^*$ has to be chosen sufficiently small, it is a constant not depending on $h$. In particular this means that it does not tend to zero in this work.  \\

{\bf (ii)} {\em `Small blocks - large blocks' approach:} For sufficiently small $\delta>0$, let $\mathcal {M}_h^{-\delta}\subset\mathcal {M}_h$ be the $\delta$-enlarged neighborhood (using geodesic distance) of the union of the boundaries of all $J_{k,m_h}$. The minus sign in the superscript indicates that this (small) piece will be `cut out' in the below construction.  We obtain $J_{k,m_h}^\delta = J_{k,m_h}\backslash \mathcal {M}_h^{-\delta}$ (`large blocks') and $J_{k,m_h}^{-\delta} = J_{k,m_h}\backslash J_{k,m_h}^\delta$ (`small blocks') for $1\leq k\leq m_h$. Geometrically we envision $J_{k,m_h}^{-\delta}$ as a small strip along the boundaries of $J_{k,m_h}$ (lying inside  $J_{k,m_h}$), and $J_{k,m_h}^\delta$ is the set that remains when  $J_{k,m_h}^{-\delta}$  is cut out of $J_{k,m_h}$. We have $V_r(J_{k,m_h}^{-\delta}) = O(\delta)$, uniformly in $k$ and $h$. The construction of the partition is such that the boundaries of the projections of all the sets $J_{k,m_h},\,J_{k,m_h}^\delta$ and $J_{k,m_h}^{-\delta}$ onto the local tangent planes are null sets, and thus Jordan measurable. This will be used below.\\

Let $B_h(A) = \{ \sup_{t\in A}|X_h(t)| \ge \theta\}$ and as a shorthand notation we use $p_h(A) = \mathbb{P}(B_h(A)).$ \\

Approximating $\mathcal{M}_h$ by $\bigcup\limits_{k\leq m_h}J_{k,m_h}^\delta$ leads to a corresponding approximation of $p_h(\mathcal{M}_h)$ by  $p_h(\bigcup\limits_{k\leq m_h}J_{k,m_h}^\delta)$. Even though the volume of $\bigcup\limits_{k\leq m_h}J_{k,m_h}^{-\delta},$ i.e. the difference of $\mathcal{M}_h$ and $\bigcup\limits_{k\leq m_h}J_{k,m_h}^\delta$, tends to infinity as $h\rightarrow0$ if we consider $\delta$ fixed, the difference $p_h(\mathcal{M}_h) - p_h(\bigcup\limits_{k\leq m_h}J_{k,m_h}^\delta)$ turns out to be of the order $O(\delta)$. Thus we have to choose $\delta$ small enough.\\

{\bf (iii)} {\em Refinement of the partition:} Let $J$ denote one of the sets $J_{k,m_h},\,J_{k,m_h}^\delta$ and $J_{k,m_h}^{-\delta}$, and let $\{S_i^h(J)\subset J, i=1, \cdots, N_h(J)\}$ be a cover of this piece constructed using the same Delaunay triangulation technique as above, but of course based on a smaller mesh. As above, by controlling the mesh size, we can control the norm of the partition uniformly over $h$, because of the uniform boundedness of the curvatures of the manifolds ${\cal M}_h$. \\

The probabilities $p_h(J_{k,m_h}^\delta)$ are approximated by the sum of the probabilities $p_h(S_i^h)$, with $S_i^h$ the cover of $J_{k,m_h}^\delta$ introduced above. It will turn out that the approximation error can be bounded by a double sum, using Bonferroni inequality. To show this double sum is negligible compared with the sum, we have to make sure the volume of $S_i^h$ is not too small, as long as $S_i^h$ is sufficiently small. The double sum will be small if $h$ is small. It turns out that $p_h(J_{k,m_h}^\delta)$ essentially behaves like the tail of a normal distribution.\\

{\bf (iv)} {\em Projection into tangent space of refined partition:} We approximate the small pieces $S_i^h$ on the manifold by $\tilde{S}_i^h$, the projection onto the tangent space and correspondingly approximate the probabilities $p_h(S_i^h)$ by the corresponding probabilities of a transformed field over $\tilde{S}_i^h$. More precisely, we choose some point $s_i^h$ on $S_i^h(J)$ and orthogonally project $S_i^h(J)$ onto the tangent space of $\mathcal {M}_h$ at the point $s_i^h$. We denote the mapping by $P_{s_i^h}(\cdot)$ or simply $P_{s_i}(\cdot)$ and we let $\tilde S_i^h(J)=P_{s_i}(S_i^h(J))$, which, as indicated above are Jordan measurable by construction. The error generated from the approximation is controlled by choosing the norm of the partitions given by the $S_i^h$ to be sufficiently small.  \\

In the following, if $J$ is explicitly indicated in the context, then we often drop $J$ in the notation and simply write $S_i^h$ instead of $S_i^h(J)$. For simplicity and generic discussion, we sometimes also omit the index $i$ of $s_i^h$, $S_i^h$ and $\tilde S_i^h$.\\
\begin{figure}[h]
\centering
\includegraphics[width=10cm]{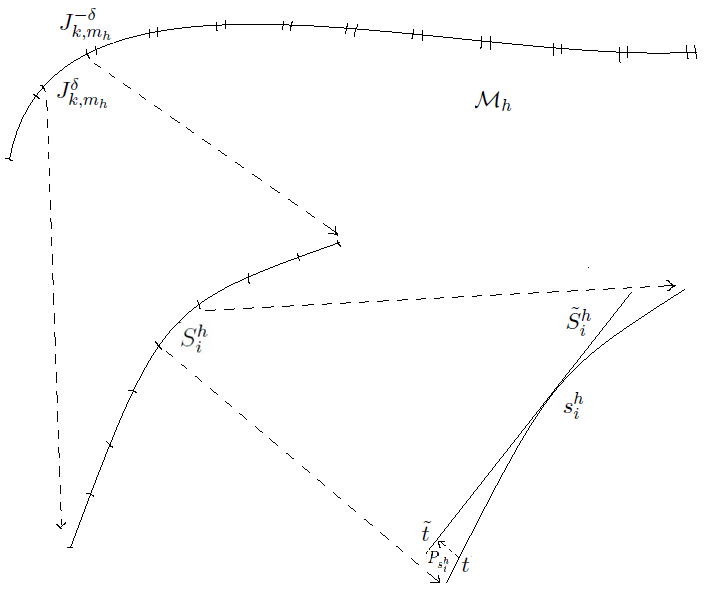}
\caption{This figure visualizes some of the definitions introduced here in the case $r=1$ and $n = 2$.}
\label{Figure1}
\end{figure}

{\bf (v)} {\em Discretizing the projection into the tangent space:}  The probabilities  $p_h(\tilde{S}_i^h)$ of the sets $\tilde{S}_i^h$ introduced in the previous step are approximated by replacing the probability of the supremum in $p_h(\tilde{S}_i^h)$ by a maximum over a collection of `dense grid' on $\tilde{S}_i^h$. The accuracy of the approximation is controlled by choosing both $\gamma$ and $h$ sufficiently small. The construction of the dense grid is as follows:\\

Let $\{M_{s^h}^j: j=1,\cdots,r\}$ be linearly independent orthonormal vectors spanning the tangent space of $\mathcal {M}_h$ at the point $s^h$, and let $M_s^h$ denote the $n\times r$ matrix with $M_{s^h}^j$ as columns. For a given $\gamma$ consider the (discrete) set $\tilde\Gamma_{\gamma \theta^{-2/\alpha}}(\tilde S^h):=\{u: u=s^h+\sum_{j=1}^ri_j\gamma \theta^{-2/\alpha}M_{s^h}^j\in\tilde S^h, i_j\in\mathbb{Z}\}$ and let $\Gamma_{\gamma \theta^{-2/\alpha}}(S^h)=(P_{s^h})^{-1}(\tilde\Gamma_{\gamma \theta^{-2/\alpha}}(\tilde S^h))$, which is a subset of $S^h$. Note that the geodesic distance between any two adjacent points in $\Gamma_{\gamma \theta^{-2/\alpha}}(S^h)$ is still of the order $O(\gamma \theta^{-2/\alpha}),$ again due to the assumed uniformly positive condition number of the manifolds ${\cal M}_h$.\\

The collection of all sets of dense points in $\Gamma_{\gamma \theta^{-2/\alpha}}(S^h)$ results in a set ${\mathbb T}^\delta_h$ of dense points in $\bigcup_{k\leq m_h}J_{k,m_h}^\delta$. It will turn out that the probability $1 - p_h({\mathbb T}^\delta_h) = \mathbb{P}\big(\bigcap_k \big(B_h({\mathbb T}^\delta_h \cap J_{k,m_h}^\delta)\big)^c\big)$ can be approximated by assuming the events $\big(B_h({\mathbb T}^\delta_h \cap J_{k,m_h}^\delta)\big)^c,\,k = 1,\ldots,m_h,$ to be independent. To make sure this approximation is valid, $\delta$ may be not too small and $\gamma$ may be not too small compared with $h$.\\[5pt]

Putting everything together will then complete the proof. \\

{\bf Details of the proof.} We now present the details by using the notation introduced above. We split the proof into different parts in order to provide more structure. Note that the parts do not really follow the logical steps outlined above.\\

{\bf Part 1}. Recall the definition of the refined partition $\{S_i^h,\,i = 1,\ldots,N\}$ of $J_{k,m_h}$ given in (iii). Here we show that $\sum_i p_h(S_i^h) \approx \theta^{2r/\alpha}\Psi(\theta) \,\;H_\alpha^{(r)} \int_{J_{k,m_h}}\|D_s^hM_s^h\|_r ds$ and that a similar approximation holds for $J_{k,m_h}$ replaced by $J^\delta_{k,m_h}$. To this end we will utilize the projections $\tilde{S}_i^h$ of $S_i^h$ onto the tangent space (see (iv)) as well as the approximation of $\tilde{S}_i^h$ by a set of dense points introduced in (v).\\

The various asymptotic approximations in this step are similar to those in the proof of Theorem 1 in Mikhaleva and Piterbarg (1997), but here we consider them in the uniform sense. As indicated above, the uniform boundedness of the curvature of $\mathcal {M}_h$ can be guaranteed due to the boundedness of the curvature (positive condition number) of $\mathcal {M}_1$. For any $\epsilon_1>0$, there exists a constant $\delta_1>0$ (not depending on $h$) such that if the volumes of all $S_i^h =S_i^h(J_{k,m_h}^\delta) $ 
are less than $\delta_1$, then we have
\begin{align}\label{S1S}
1-\epsilon_1\leq\frac{V_r(\tilde S^h)}{V_r(S^h)}\leq1+\epsilon_1,
\end{align}
where $V_r(\cdot)$ is the $r$-dimensional Hausdorff measure. On $\tilde S^h$ we consider the Gaussian field defined as
\begin{align*}
\tilde X_h(\tilde t)=X_h(t),\quad  \text{with $t \in S^h$ such that} \;\tilde t= P_{s_i}(t) \in \tilde S^h.
\end{align*}

Due to local equi-$(\alpha, D_t^h)$-stationarity of $X_h(t)$, for any $\epsilon_2>0$, the covariance function $\tilde r_h(\tilde t_1,\tilde t_2)$ of the field $\tilde X_h(\tilde t)$ satisfies
\begin{align*}
1-(1+\epsilon_2/4)\|D_s^h(t_1-t_2)\|^\alpha\leq \tilde r_h(\tilde t_1,\tilde t_2)\leq1-(1-\epsilon_2/4)\|D_s^h(t_1-t_2)\|^\alpha
\end{align*}
for all $t_1, t_2\in \tilde S^h$, if the volume of $S_h$ is less than a certain threshold $\delta_2$, which only depends on $\epsilon_2$. By possibly decreasing $\delta_2$ further we also have
\begin{align*}
1-(1+\epsilon_2/2)\|D_s^h(\tilde t_1-\tilde t_2)\|^\alpha\leq \tilde r_h(\tilde t_1,\tilde t_2)\leq1-(1-\epsilon_2/2)\|D_s^h(\tilde t_1-\tilde t_2)\|^\alpha
\end{align*}
for all $\tilde t_1,\tilde t_2\in \tilde S^h$. Note that this inequality holds uniformly over all $\tilde S^h$ under consideration, due to the curvature being bounded on $\mathcal{M}_h$.\\

On $\tilde S^h$ we introduce two homogeneous Gaussian fields $X_h^+(\tilde t),X_h^-(\tilde t)$ such that their covariance functions satisfy
\begin{align*}
r_h^+(\tilde t_1,\tilde t_2)=1-(1+\epsilon_2)\|D_s^h(\tilde t_1-\tilde t_2)\|^\alpha+o(\|D_s^h(\tilde t_1-\tilde t_2)\|^\alpha)\\
r_h^-(\tilde t_1,\tilde t_2)=1-(1-\epsilon_2)\|D_s^h(\tilde t_1-\tilde t_2)\|^\alpha+o(\|D_s^h(\tilde t_1-\tilde t_2)\|^\alpha)
\end{align*}
as $\|\tilde t_1-\tilde t_2\|\rightarrow0$. Thus if  the volumes of all $S^h$ under consideration are sufficiently small then
\begin{align*}
r_h^+(\tilde t_1,\tilde t_2)\leq \tilde r_h(\tilde t_1,\tilde t_2) \leq r_h^-(\tilde t_1,\tilde t_2)
\end{align*}
holds for all $\tilde t_1,\tilde t_2\in \tilde S^h$. This can be achieved by possibly adjusting $\delta_2$ from above. Slepian's inequality in Lemma \ref{Slepian} implies that
\begin{align*}
&\mathbb{P}\bigg(\sup_{\tilde t \in \tilde S^h} X_h^-(\tilde t)>\theta\bigg)\leq \mathbb{P}\bigg(\sup_{\tilde t \in \tilde S^h} \tilde X_h(\tilde t)>\theta\bigg)\\
&\hspace{2cm}= \mathbb{P}\bigg(\sup_{\tilde t \in S^h} X_h(t)>\theta\bigg)\leq \mathbb{P}\bigg(\sup_{\tilde t \in\tilde S^h} X^+(\tilde t)>\theta\bigg),
\end{align*}
and that
\begin{align}
&\mathbb{P}\bigg(\max_{\tilde t\in\tilde\Gamma_{\gamma \theta^{-2/\alpha}}(\tilde S^h)}X_h^-(\tilde t)>\theta\bigg)\leq \mathbb{P}\bigg(\max_{\tilde t\in\tilde\Gamma_{\gamma \theta^{-2/\alpha}}(\tilde S^h)}\tilde X_h(\tilde t)>\theta\bigg)\nonumber\\
&\hspace{2cm}=\mathbb{P}\bigg(\max_{t\in\Gamma_{\gamma \theta^{-2/\alpha}}(S^h)} X_h(t)>\theta\bigg)\leq \mathbb{P}\bigg(\max_{\tilde t\in\tilde\Gamma_{\gamma \theta^{-2/\alpha}}(\tilde S^h)}X_h^+(\tilde t)>\theta\bigg).\label{SlepianApp}
\end{align}
For $\tau\in \mathbb{R}^n$ such that $(1+\epsilon_2)^{-1/\alpha}{(D_s^h)}^{-1}\tau \in \tilde S^h$, denote $X_h^+\Big((1+\epsilon_2)^{-1/\alpha}{(D_s^h)}^{-1}\tau\Big)$ by $Y_h^+(\tau)$ as a function of $\tau$. The covariance function of $Y_h^+(\tau)$ is
\begin{align*}
r_{Y_h}^+(\tau_1,\tau_2)&=r_h^+\Big((1+\epsilon_2)^{-1/\alpha}{(D_s^h)}^{-1}\tau_1,\; (1+\epsilon_2)^{-1/\alpha}{(D_s^h)}^{-1}\tau_2\Big)\\
&=1-(1+\epsilon_2)\Big\|D_s^h\Big((1+\epsilon_2)^{-1/\alpha}{(D_s^h)}^{-1}\tau_1-(1+\epsilon_2)^{-1/\alpha}{(D_s^h)}^{-1}\tau_2\Big)\Big\|^\alpha
+o(\|\tau_1-\tau_2\|^\alpha)\\
&=1-\|\tau_1-\tau_2\|^\alpha+o(\|\tau_1-\tau_2\|^\alpha)
\end{align*}
as $\|\tau_1-\tau_2\|\rightarrow 0$.
An application of Lemma \ref{Piece} gives that for any $\epsilon_3>0$ and $\theta$ large enough
\begin{align}
&\frac{\mathbb{P}\Big(\max_{\tilde t\in\tilde\Gamma_{\gamma \theta^{-2/\alpha}}(\tilde S^h)}X_h^+(\tilde t)>\theta\Big)}{\theta^{2r/\alpha}\Psi(\theta)}\nonumber\\
&=\frac{\mathbb{P}\Big(\max_{\tau\in(1+\epsilon_2)^{1/\alpha} D_s^h\tilde\Gamma_{\gamma \theta^{-2/\alpha}}(\tilde S^h)}X_h^+\Big((1+\epsilon_2)^{-1/\alpha}{(D_s^h)}^{-1}\tau\Big)>\theta\Big)}{\theta^{2r/\alpha}\Psi(\theta)}\nonumber\\
&\leq\frac{H_\alpha^{(r)}(\gamma)}{\gamma^r}(1+\epsilon_3)V_r((1+\epsilon_2)^{1/\alpha} D_s^h\tilde S^h)\label{ApplyUniformPiece1}\\
&=(1+\epsilon_2)^{r/\alpha}(1+\epsilon_3)\frac{H_\alpha^{(r)}(\gamma)}{\gamma^r}\|D_s^hM_s^h\|_rV_r(\tilde S^h)\nonumber.
\end{align}
Similarly, by defining $Y_h^-(\tau)=X_h^-\Big((1-\epsilon_2)^{-1/\alpha}{(D_s^h)}^{-1}\tau\Big)$, we get
\begin{align}\label{ApplyUniformPiece2}
\frac{\mathbb{P}\Big(\max_{\tilde t\in\tilde\Gamma_{\gamma \theta^{-2/\alpha}}(\tilde S^h)}X_h^-(\tilde t)>\theta\Big)}{\theta^{2r/\alpha}\Psi(\theta)}\geq (1-\epsilon_2)^{r/\alpha}(1-\epsilon_3)\frac{H_\alpha^{(r)}(\gamma)}{\gamma^r}\|D_s^hM_s^h\|_rV_r(\tilde S^h).
\end{align}

Combining (\ref{S1S}), (\ref{SlepianApp}), (\ref{ApplyUniformPiece1}) and (\ref{ApplyUniformPiece2}), we obtain for $V_r(S^h)$ small enough and $\theta$ large enough that for any $\epsilon > 0$
\begin{align*}
&\big(1-\tfrac{\epsilon}{4}\big)\, \frac{H_\alpha^{(r)}(\gamma)}{\gamma^r}\|D_s^hM_s^h\|_rV_r(S^h)\\
&\hspace{2cm}\leq \frac{\mathbb{P}\Big(\max_{t\in\Gamma_{\gamma \theta^{-2/\alpha}}(S^h)} X_h(t)>\theta\Big)}{\theta^{2r/\alpha}\Psi(\theta)}\\
&\hspace{4cm}\leq \big(1-\tfrac{\epsilon}{4}\big)\,\frac{H_\alpha^{(r)}(\gamma^r)}{\gamma^r}\|D_s^hM_s^h\|_rV_r(S^h),
\end{align*}
and since Lemma \ref{HGamma} says that $\frac{H_\alpha^{(r)}(\gamma)/\gamma^r}{H_\alpha^{(r)}} \to 1$ as $\gamma \to 0$, we further have for $\gamma$ sufficiently small that 
\begin{align}
&\big(1-\tfrac{\epsilon}{2}\big)\, H_\alpha^{(r)}\|D_s^hM_s^h\|_rV_r(S^h) \nonumber\\
&\hspace{2.5cm}\leq\frac{\mathbb{P}\Big(\max_{t\in\Gamma_{\gamma \theta^{-2r/\alpha}}(S^h)} X_h(t)>\theta\Big)}{\theta^{2r/\alpha}\Psi(\theta)}\nonumber\\
&\hspace{5cm}\leq \big(1-\tfrac{\epsilon}{2}\big)\,  H_\alpha^{(r)}\|D_s^hM_s^h\|_rV_r(S^h).\label{PieceBound}
\end{align}
This in fact holds for any $S^h = S^h_i$. We now want to add over $i.$ To this end observe that  $\sum_{i=1}^{N_h}(\|D_{s_i}^hM_{s_i}^h\|_rV_r(S_i^h))$ is a Riemann sum, namely, for any $\epsilon>0$, there exists a $\delta>0$ such that for $\max_{i=1, \cdots, N_h}{V_r(S_i^h)}<\delta$, we have for $h$ sufficiently small that
\begin{align}\label{InteSum}
(1-\epsilon)\int_{J_{k,m_h}^\delta}\|D_s^hM_s^h\|_rds \leq \sum_{i=1}^{N_h}\|D_{s_i^h}^hM_{s_i^h}^h\|_rV_r(S_i^h) \leq (1+\epsilon)\int_{J_{k,m_h}^\delta}\|D_s^hM_s^h\|_rds.
\end{align}
The selection of $\delta$ only depends on $\epsilon$, and the uniformity comes from the fact that as $h\to 0$, $\|D_{t_1}^h-D_{t_2}^h\|_n=\|D_{ht_1}^0-D_{ht_2}^0\|_n+o(1)$ for any $t_1$ and $t_2$ and that $D_{t^*}^0$ is continuous in $t^*\in\mathcal{H}_1$.\\

It follows from (\ref{PieceBound}) and (\ref{InteSum}) that for any $\epsilon > 0$ and  $\gamma, \sup_{0 < h \le 1} \max_{i=1, \cdots, N_h}{V_r(S_i^h)}$ sufficiently small and $\theta$ large enough 
\begin{align}
&(1-\epsilon) H_\alpha^{(r)} \int_{J_{k,m_h}^\delta}\|D_s^hM_s^h\|_rds
\leq\frac{\sum_{i=1}^{N_h}\mathbb{P}\Big(\max_{t\in\Gamma_{\gamma \theta^{-2/\alpha}}(S_i^h)} X_h(t)>\theta\Big)}{\theta^{2r/\alpha}\Psi(\theta)} \nonumber\\
&\hspace{5cm}
\leq (1+\epsilon)\,H_\alpha^{(r)} \int_{J_{k,m_h}^\delta}\|D_s^hM_s^h\|_rds.\label{PieceSum}
\end{align}

Since the distribution of $X_h$ is symmetric, we also have
\begin{align}
&(1-\epsilon)\,H_\alpha^{(r)} \int_{J_{k,m_h}^\delta}\|D_s^hM_s^h\|_rds
\leq\frac{\sum_{i=1}^{N_h}\mathbb{P}\Big(\min_{t\in\Gamma_{\gamma \theta^{-2/\alpha}}(S_i^h)} X_h(t)<-\theta\Big)}{\theta^{2r/\alpha}\Psi(\theta)}\nonumber\\
&\hspace{5cm}\leq (1+\epsilon)\,H_\alpha^{(r)} \int_{J_{k,m_h}^\delta}\|D_s^hM_s^h\|_rds.\label{PieceSum2}
\end{align}
We emphasize that these inequalities hold when the norm of the partition is below a certain threshold that is independent of the choice of $h$.\\

Following a similar procedure as above we see that (\ref{PieceSum}) and (\ref{PieceSum2})  continue to hold (for $h$ and $\max_{i=1, \cdots, N_h}{V_r(S_i^h)}$ sufficiently small and $\theta$ large enough) if $\max_{t\in\Gamma_{\gamma \theta^{-2/\alpha}}(S_i^h)} X_h(t)$ in (\ref{PieceSum}) is replaced by $\sup_{t\in S_i^h} X_h(t)$, and similarly, $\min_{t\in\Gamma_{\gamma \theta^{-2/\alpha}}(S_i^h)} X_h(t)$ in (\ref{PieceSum2}) is replaced by $\inf_{t\in S_i^h} X_h(t)$. 
%
%
%
Moreover, if we consider $S_i^h(J_{k,m_h})$ and $S_i^h(J_{k,m_h}\backslash J_{k,m_h}^\delta)$, instead of $S_i^h(=S_i^h(J_{k,m_h}^\delta))$, these inequalities continue to hold.  In particular for $J_{k,m_h}$ we obtain  
\begin{align}\label{SumToInt}
&\frac{\sum_{i=1}^{N_h(J_{k,m_h})}\mathbb{P}\Big(\sup_{t\in S_i^h(J_{k,m_h})} X_h(t)>\theta\Big)}{\theta^{2r/\alpha}\Psi(\theta)}
=(1+ o(1))\;
%
H_\alpha^{(r)} \int_{J_{k,m_h}}\|D_s^hM_s^h\|_r ds,
\end{align}
where 
the $o(1)$-term is uniform in $1\leq k\leq m_h$ as  
$\theta \to\infty$.\\

{\bf Part 2.} Here we show that $\sum_{k\leq m_h}\mathbb{P}\Big(\sup_{t\in J_{k,m_h}}X_h(t)> \theta\Big) \approx \theta^{2r/\alpha}\Psi(\theta)H_\alpha^{(r)} \int_{\mathcal {M}_h}\|D_s^hM_s^h\|_rds$ as $h \to 0.$  Again we will use the various approximations introduced at the beginning of the proof. \\

Let $\{S_i^h : i=1, \cdots, N_h\}$ denote the partition of $J_{k,m_h}$ constructed in (iii). This partition consists of closed non-overlapping subsets, i.e. their interiors are disjoint. Let further
\begin{align*}
B_i=\Big\{\sup_{t\in S_i^h} X_h(t)>\theta\Big\}.
\end{align*}
Then obviously,
\begin{align*}
\mathbb{P}\Big(\sup_{t\in J_{k,m_h}} X_h(t)>\theta\Big)=\mathbb{P}\bigg(\bigcup_{i=1}^{N_h}B_i\bigg).
\end{align*}
We now use 
\begin{align*}
\sum_{i=1}^{N_h}\mathbb{P}(B_i)-\sum_{1\leq i<j\leq N_h}\mathbb{P}(B_i \cap B_j)\leq \mathbb{P}\bigg(\bigcup_{i=1}^{N_h}B_i\bigg) \leq\sum_{i=1}^{N_h}\mathbb{P}(B_i),
\end{align*}
and we want to show that the double sum on the left-hand side is negligible as compared to the sum, so that we essentially have upper and lower bounds for  $\mathbb{P}\big(\bigcup_{i=1}^{N_h}B_i\big)$ in terms of $\sum_{i=1}^{N_h}\mathbb{P}(B_i).$ To see this, first observe that it follows from (\ref{SumToInt}) that for $\max_{i=1,\cdots,N_h} V_r(S_i^h)$ small enough we have as $\theta\rightarrow \infty$ that
\begin{align}\label{SumB}
\sum_{i=1}^{N_h}\mathbb{P}(B_i)=O(\theta^{2r/\alpha}\Psi(\theta)).
\end{align}
We thus want to show that $\sum_{1\leq i<j\leq N_h}\mathbb{P}(B_i \cap B_j)= o(\theta^{2r/\alpha}\Psi(\theta))$ as $\theta\rightarrow \infty.$ The proof for a fixed manifold (i.e. $h$ fixed) can be found in the last part of Mikhaleva and Piterbarg (1997). Our proof for the more general case (uniformly in $h$) is following a similar procedure. It will turn out that we obtain the desired result if the norm of the partition given by the $S_i^h$ can be chosen arbitrarily small, uniformly in $h$. It has been discussed at the beginning of the proof that this is in fact the case.\\

 Let $U=\{(i,j): B_i$ and $B_j$ are adjacent$\}$ and $V=\{(i,j): B_i \text{ and } B_j \text{ are not adjacent}\}$, where non-adjacent means that their boundaries do not touch. Note that
\begin{align}\label{DoubleSum}
\sum_{1\leq i<j\leq N_h}\mathbb{P}(B_i\cap B_j)=\sum\limits_{\substack{1\leq i<j\leq N_h, \\ (i,j)\in U}}\mathbb{P}(B_i\cap B_j) + \sum\limits_{\substack{1\leq i<j\leq N_h, \\ (i,j)\in V}}\mathbb{P}(B_i\cap B_j).
\end{align}
In what follows we discuss the two sums on the right hand side of (\ref{DoubleSum}). First we consider the case that $S_i^h,\,S_j^h \in U$  are adjacent, i.e. $(i,j) \in U$. 
The developments in Part 1 are here applied to $S_i^h$, $S_{j}^h$ and $S_i^h\cup S_{j}^h$, respectively. We choose the points  where the tangent spaces are placed to be the same for $S_i^h$, $S_{j}^h$ and $S_i^h\cup S_{j}^h$, i.e., we choose this point to lie on the boundary of both $S_i^h$ and $S_{j}^h$. Simply denote this point as $s$. Then, by using the results from Part 1, for any $\epsilon>0$,  when $\max_{(i,j)\in U}V_r(S_i^h\cup S_{j}^h)$ is small enough and $\theta$ is large enough, then the bounds obtained as in Part 1 result in
\begin{align*}
\frac{\mathbb{P}(B_i \cap B_{j})}{\theta^{2r/\alpha}\Psi(\theta)}&=\frac{\mathbb{P}(B_i)+\mathbb{P}(B_j)-\mathbb{P}(B_i\cup B_{j})}{\theta^{2r/\alpha}\Psi(\theta)}\\
&\leq (1+\epsilon)H_\alpha^{(r)}\; \|D_s^hM_s^h\|_r\;V_r(S_i^h)+(1+\epsilon)H_\alpha^{(r)}\;\|D_s^hM_s^h\|_r\;V_r(S_{j}^h)\\
&\hspace{1cm}-(1-\epsilon)H_\alpha^{(r)}\; \|D_s^hM_s^h\|_r\;V_r(S_i^h\cup S_{j}^h)\\
&=2\epsilon H_\alpha^{(r)}\; \|D_s^hM_s^h\|_r\;[V_r(S_i^h)+V_r(S_{j}^h)].
\end{align*}

The sum of the right hand side of the above inequalities over $(i,j)\in U$ again is a Riemann sum that approximates an integral over $J_{k,m_h}$. Since $\lim_{h\rightarrow0,ht=t^*}D_t^h=D_{t^*}^0$ uniformly in $t^*\in\mathcal{H}_1$, and since the components of $D_{t^*}^0$ are continuous and bounded in $t^*\in \mathcal{H}_1$, there exists a finite real $c>0$ such that 
\begin{align}\label{DMbound}
\sup_{s\in\mathcal {M}_h, 0<h\leq1}\|D_s^hM_s^h\|_r\leq c.
\end{align}
Hence as $\max_{1\leq i \leq N_h}V_r(S_i^h)\rightarrow 0$ and $\theta \to \infty$, and noting that $\epsilon > 0$ is arbitrary, we have
\begin{align}\label{SubSumBound2}
\sum\limits_{\substack{1\leq i<j\leq N_h, \\ (i,j)\in U}}\mathbb{P}(B_i\cap B_j) = o(\theta^{2r/\alpha}\Psi(\theta)).
\end{align}

Next we proceed to consider the case that $(i,j) \in V$, i.e. $S_i^h, S_j^h$ are not adjacent on $J_{k,m_h}$. To find a upper bound for $\mathbb{P}(B_i \cap B_j)$, first notice that
\begin{align}\label{IntersectionBound}
\mathbb{P}(B_i \cap B_j) &= \mathbb{P}\Big(\sup_{t\in S_i^h} X_h(t)>\theta, \sup_{t\in S_j^h} X_h(t)>\theta\Big)\nonumber\\
&\leq \mathbb{P}\Big(\sup_{t\in S_i^h,s\in S_j^h} \left(X_h(t)+X_h(s)\right)>2\theta\Big).
\end{align}
In order to further estimate this probability we will use the following Borel theorem from Belyaev and Piterbarg (1972).
\begin{theorem}\label{Borel}
Let $\{X(t),t\in T\}$ be a real separable Gaussian process indexed by an arbitrary parameter set $T$, let
\begin{align*}
\sigma^2=\sup_{t\in T}\Var X(t)<\infty, \quad m=\sup_{t\in T}\mathbb{E}X(t)<\infty,
\end{align*}
and let the real number $b$ be such that
\begin{align*}
\mathbb{P}\Big(\sup_{t\in T}X(t)-\mathbb{E}X(t)\geq b\Big)\leq\frac{1}{2}.
\end{align*}
Then for all $x$
\begin{align*}
\mathbb{P}\Big(\sup_{t\in T}X(t)>x\Big)\leq2\bar\Phi\Big(\frac{x-m-b}{\sigma}\Big).
\end{align*}
\end{theorem}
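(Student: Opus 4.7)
The plan is to reduce the statement to Gaussian concentration of the supremum of a centered Gaussian process. First, replace $X$ by its centered version $\tilde X(t) = X(t) - \mathbb{E}X(t)$, a separable centered Gaussian process with $\sup_t \Var(\tilde X(t)) = \sigma^2$, and let $M' = \sup_t \tilde X(t)$. Since $X(t) \leq \tilde X(t) + m$ for every $t$, we have $\mathbb{P}(\sup_t X(t) > x) \leq \mathbb{P}(M' > x - m)$. Writing $u = x - m - b$, it suffices to prove $\mathbb{P}(M' > b + u) \leq 2\bar\Phi(u/\sigma)$ for every real $u$. When $u \leq 0$ this is immediate because $\bar\Phi(u/\sigma) \geq 1/2$ yields a right-hand side at least $1$, so the remaining task is the case $u > 0$.

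The hypothesis $\mathbb{P}(M' \geq b) \leq 1/2$ forces $b$ to dominate a median of $M'$. I would then appeal to Gaussian concentration for Lipschitz functionals. Expanding $\tilde X(t) = \sum_k \phi_k(t) \xi_k$ via a Karhunen--Lo\`eve-type series with i.i.d.\ $\xi_k \sim N(0,1)$, the map $\xi \mapsto \sup_t \sum_k \phi_k(t)\xi_k$ is Lipschitz on $\ell^2$ with constant $\sup_t\bigl(\sum_k \phi_k(t)^2\bigr)^{1/2} = \sigma$. The Borell--Sudakov--Tsirelson Gaussian concentration inequality then gives
\[
\mathbb{P}\bigl(M' \geq \mathrm{med}(M') + u\bigr) \leq \bar\Phi(u/\sigma),
\]
so that $\mathbb{P}(M' > b + u) \leq \bar\Phi(u/\sigma) \leq 2\bar\Phi(u/\sigma)$, completing the proof. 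Separability of the process is used here both to ensure measurability of $M'$ and to justify the series representation.

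The main obstacle is the concentration step itself, which rests on the Gaussian isoperimetric inequality and is the only nontrivial ingredient in the argument. A route closer to the original Belyaev--Piterbarg (1972) approach would avoid isoperimetry by introducing an independent copy $\tilde X'$ of $\tilde X$ with supremum $M''$; since $\mathbb{P}(M'' \leq b) \geq 1/2$, independence gives $\mathbb{P}(M' > b+u) \leq 2\,\mathbb{P}(M' > b+u,\, M'' \leq b)$, and on this intersection one can try to reduce to a conditional Gaussian tail estimate at the (measurable, by separability) argmax $t^*$ of $\tilde X$, using that $\tilde X'(t^*)\mid \tilde X$ is $N(0, \Var(\tilde X(t^*)))$ with variance at most $\sigma^2$. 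In that version the factor $2$ appearing in the theorem statement arises naturally from the symmetrization step rather than as slack in a sharper one-sided concentration bound.
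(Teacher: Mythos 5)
Your argument is correct, but note that the paper does not prove this statement at all: it is quoted verbatim as a known result of Belyaev and Piterbarg (1972) (it is the classical ``Borel'' inequality, stated e.g.\ as Theorem D.1 in Piterbarg, 1996), so there is no internal proof to compare against. Your reduction steps are all sound: centering and $\mathbb{E}X(t)\le m$ give $\mathbb{P}(\sup_t X(t)>x)\le \mathbb{P}(M'>x-m)$; the case $u=x-m-b\le 0$ is trivial since $2\bar\Phi(u/\sigma)\ge 1$; and the hypothesis $\mathbb{P}(M'\ge b)\le \tfrac12$ does force $b$ to be at least the smallest median of $M'$, so the one-sided Borell--Sudakov--Tsirelson bound $\mathbb{P}(M'\ge \mathrm{med}(M')+u)\le\bar\Phi(u/\sigma)$ yields even more than the claimed factor $2$. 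Two remarks. First, since the target statement essentially \emph{is} Borell's inequality, your proof is really a reduction to a cited big theorem rather than an independent argument; that is legitimate here, but be aware of the near-circularity. Second, the Karhunen--Lo\`{e}ve/Lipschitz-map justification is the soft spot for an arbitrary separable $T$: the cleaner standard route is to apply Gaussian isoperimetry directly to $A=\{M'< b\}$ (which has measure $\ge\tfrac12$), whose Cameron--Martin $u$-enlargement has measure $\ge\Phi(u/\sigma)$ and on which $M'\le b+u$, working first over finite subsets of a separating countable dense set and passing to the limit; this avoids both the series representation and any a priori a.s.\ finiteness of $M'$ (which your median formulation tacitly needs, though it does follow from the hypothesis). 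Your closing sketch of a symmetrization argument with an independent copy is only heuristic as stated --- the conditional bound at the argmax does not obviously produce $\bar\Phi(u/\sigma)$ --- but since it is offered as an aside it does not affect the correctness of the main proof.
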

%
%
There exists a constant $\zeta_1>0$ such that
\begin{align*}
\inf_{(i,j)\in V, t\in S_i^h,s\in S_j^h, 0<h\leq 1}\|t-s\|>\zeta_1,
\end{align*}
i.e., the distance between any two nonadjacent elements of the partition exceeds $\zeta_1$ uniformly in $h\in (0,1]$. This is due to the fact that the curvatures of the manifolds ${\cal M}_h$ is (uniformly) bounded, and that $V_r(S_j^h)$ is bounded away from zero uniformly in $j$ and $h$. See Lemma 3 of Genovese et al. (2012) for more details underlying this argument. The latter also implies that we can find a number $N_0>0$ such that $N_h$, the number of sets $S_i$, satisfies $N_h<N_0$ for all $h$. Assumption (\ref{SupGauss1}) implies that
\begin{align*}
\rho:=\sup_{\|t-s\|\geq\zeta_1, 0<h\leq 1}r_h(t,s)<1.
\end{align*}
We want to apply the above Borel theorem to $X_h(t)+X_h(s)$ with $t\in S_i^h$ and $s\in S_j^h$ and $(i,j)\in V$. To this end observe that
\begin{align*}
\sup_{0<h\leq 1}\sup_{t\in S_i^h,s\in S_j^h} \Var\left(X_h(t)+X_h(s)\right)\leq 2+2\rho
\end{align*}
and
\begin{align*}
\sup_{0<h\leq 1}\sup_{t\in S_i^h,s\in S_j^h} \mathbb{E}\left(X_h(t)+X_h(s)\right)=0.
\end{align*}
Next we show that there is a constant $b$ such that $\mathbb{P}\Big(\sup_{t\in S_i^h,s\in S_j^h} \left(X_h(t)+X_h(s)\right)>b\Big)\leq\frac{1}{2}$ 
for $h$ sufficiently small. Note that
\begin{align*}
\mathbb{P}\Big(\sup_{t\in S_i^h,s\in S_j^h} \left(X_h(t)+X_h(s)\right)>b\Big)&\leq \mathbb{P}\Big(\sup_{t\in J_{k,m_h},s\in J_{k,m_h}} \left(X_h(t)+X_h(s)\right)>b\Big)\\
&\leq \mathbb{P}\Big(\sup_{t\in J_{k,m_h}} X_h(t)>b/2\Big).
\end{align*}
All the arguments in {\bf Part 1} hold uniformly in $h$ as long as $\theta$ is large enough. In other words, the conclusions there can be restated by replacing $\theta$ with $x$ where $x\rightarrow\infty$. For instance, for any $\epsilon > 0$ we can choose $\max_{1\leq i \leq N_h}V_r(S_i^h)$  small enough such that
\begin{align*}
\mathbb{P}\Big(\sup_{t\in J_{k,m_h}} X_h(t)>x\Big)&\leq\sum_{i=1}^{N_h}\mathbb{P}\Big(\sup_{t\in S_i^h} X_h(t)>x\Big)\\
&\hspace*{-1cm}\leq(1+\epsilon)\,x^{2r/\alpha}\Psi(x)H_\alpha^{(r)} \int_{J_{k,m_h}}\|D_s^hM_s^h\|_rds
\end{align*}
holds for all $1 \leq k \leq m_h$ and $x > x_0$. Hence, since $x^{2r/\alpha}\Psi(x) \to 0$ as $x \to \infty$, we can find $b$ such that $\mathbb{P}(\sup_{t\in J_{k,m_h}} X_h(t)>b/2)<1/2$ for all $1 \leq k \leq m_h$ when $\max_{1\leq i \leq N_h}V_r(S_i^h)$  is sufficiently small. The above Borel inequality now gives (for large enough $\theta$) that
\begin{align}\label{PSupBound}
\mathbb{P}\Big(\sup_{t\in S_i^h,s \in S_j^h} \left(X_h(t)+X_h(s)\right)>2\theta\Big)&\leq 2\bar\Phi\bigg(\frac{\theta-b/2}{\sqrt{(1+\rho)/2}}\bigg).
\end{align}
%
Since the total number of elements in the sum in (\ref{DoubleSum}) is bounded by $N_h^2$, it follows from (\ref{IntersectionBound}) and (\ref{PSupBound}) that uniformly in $k$ (recall that the $B_i$ depend on $k$)
\begin{align}\label{SubSumBound1}
\sum\limits_{\substack{1\leq i<j\leq N_h, \\ j-i>1}}\mathbb{P}(B_i\cap B_j)\leq 2N_h^2\bar\Phi\bigg(\frac{\theta-b/2}{\sqrt{(1+\rho)/2}}\bigg)\leq 2N_0^2\bar\Phi\bigg(\frac{\theta-b/2}{\sqrt{(1+\rho)/2}}\bigg)=o(\theta^{2r/\alpha}\Psi(\theta)).
\end{align}
as $\theta \rightarrow \infty$ by using the well-known fact that $\lim_{u\to\infty}\frac{\bar\Phi(u)}{\Psi(u)}=1$ (see Cram$\acute{\rm e}$r, 1951, page 374).\\

Considering (\ref{SumB}), (\ref{DoubleSum}), (\ref{SubSumBound2}) and (\ref{SubSumBound1}) and their respective conditions, we have 
\begin{align}\label{ProToSum}
\mathbb{P}\Big(\sup_{t\in J_{k,m_h}} X_h(t)>\theta\Big)=(1+ o(1))\,\sum_{i=1}^{N_h}\mathbb{P}\Big(\sup_{t\in S_i^h} X_h(t)>\theta\Big)\quad \text{as }\,\theta \to \infty,
\end{align}
where the $o(1)$-term is uniform in $k$.\\

Combining (\ref{SumToInt}) and (\ref{ProToSum}), we have for $\sup_{h \in (0,1]}\max_{1\leq i \leq N_h}V_r(S_i^h)$ sufficiently small, that
\begin{align}\label{G7}
\sum_{k\leq m_h}\mathbb{P}\Big(\sup_{t\in J_{k,m_h}}X_h(t)> \theta\Big)=(1+ o(1))\theta^{2r/\alpha}\Psi(\theta)H_\alpha^{(r)} \int_{\mathcal {M}_h}\|D_s^hM_s^h\|_rds\quad\text{as\;}\,h \to 0.
\end{align}

{\bf Part 3.}
Note that from the expression of $\theta$ in (\ref{ThetaExp}) we have  for any fixed $z$ 
\begin{align}\label{PhiX}
\theta^{2r/\alpha}\Psi(\theta)=\frac{\theta^{2r/\alpha-1}}{\sqrt{2\pi}}\exp\Big\{-\frac{\theta^2}{2}\Big\}=\frac{h^r\exp\{-z\}}{H_\alpha^{(r)} \int_{\mathcal {M}_1}\|D_s^0M_s^1\|_rds}(1+o(1))=O(h^r)
\end{align}
as $h\to0$.\\

Observing that  $\max_{1\leq k \leq m_h}V_r(J_{k,m_h}^{-\delta}) = O(\delta)$ (uniformly in $h$), and using (\ref{PhiX}) we obtain for $h$ small enough that
%
{\allowdisplaybreaks
\begin{align*}
0&\leq \mathbb{P}\Big(\sup_{t\in \mathcal {M}_h}X_h(t)> \theta\Big)-\mathbb{P}\Big(\sup_{t\in \cup_{k\leq m_h}J_{k,m_h}^\delta}X_h(t) > \theta\Big)\\
&\leq \mathbb{P}\Big(\sup_{t\in \mathcal {M}_h\backslash\cup_{k\leq m_h}J_{k,m_h}^\delta}X_h(t) > \theta\Big)\\
&\leq \sum_{k=1}^{m_h}\mathbb{P}\Big(\sup_{t\in J_{k,m_h}^{-\delta}}X_h(t) > \theta\Big)\\
&\leq (1+\epsilon)\,\theta^{2r/\alpha}\Psi(\theta)H_\alpha^{(r)} \sum_{k=1}^{m_h}\int_{J_{k,m_h}^{-\delta}}\|D_s^hM_s^h\|_rds\\
&\leq O(\delta) (1+\epsilon) H_\alpha^{(r)} c m_h\theta^{2r/\alpha}\Psi(\theta)\\
&\leq O(\delta) (1+\epsilon)\,H_\alpha^{(r)} c\; O((hl^*)^{-r}) \frac{h^r\exp\{-z\}}{H_\alpha^{(r)} \int_{\mathcal {M}_1}\|D_s^0M_s^1\|_rds}\\
&= O(\delta),
\end{align*}
}
uniformly in $k.$ Here $c$ is from (\ref{DMbound}).  
Similarly, (and again uniformly in $k$) we have 
%
$ 0\leq \mathbb{P}\big(\inf_{t\in \mathcal {M}_h}X_h(t) < -\theta\big)-\mathbb{P}\big(\inf_{t\in \cup_{k\leq m_h}J_{k,m_h}^\delta}X_h(t) < -\theta\big) = O(\delta)$ uniformly in $0 < h \le h_1$ for some $h_1 > 0$. 
%
Collecting what we have we get that uniformly in $0 < h \le h_1$ 
%
%
\begin{align}\label{G1}
\mathbb{P}\Big(\sup_{t\in \mathcal {M}_h}|X_h(t)|\leq \theta\Big)=\mathbb{P}\Big(\sup_{t\in \cup_{k\leq m_h}J_{k,m_h}^\delta}|X_h(t)| \leq \theta\Big)+ O(\delta)
\end{align}
and
\begin{align}\label{G1T}
\sum_{k=1}^{m_h}\mathbb{P}\Big(\sup_{t\in J_{k,m_h}}|X_h(t)| > \theta\Big)=\sum_{k=1}^{m_h}\mathbb{P}\Big(\sup_{t\in J_{k,m_h}^\delta}|X_h(t)| > \theta\Big)+O(\delta).
\end{align}
%

{\bf Part 4.} Here we show that replacing $\cup_{k \le m_h} J^\delta_{k,m_h}$ by the dense `grid' ${\mathbb T}^\delta_h$ (see (v)) leads to a negligible error in the corresponding extreme value probabilities.\\
 
We write ${\mathbb T}^\delta_h = \bigcup_{1\leq k \leq m_h} \Gamma_{\gamma \theta^{-2/\alpha}}(J_{k,m}^\delta)$ as $\{t_j,j=1,\cdots, N_h^*\}.$  Our assumptions assure that $N_h^* =O(\frac{\theta^{2r/\alpha}}{h^r\gamma^r})$, because $V_r({\cal M}_h) = O(h^{-r})$ and the `mesh size' of the curvilinear mesh on ${\cal M}_h$ is $O( \frac{\theta^{2/\alpha}}{\gamma^r})$, due to the construction of the triangulation and the uniformly bounded curvature on the manifolds ${\cal M}_h$.\\

With (\ref{PieceSum}), (\ref{DMbound}) and (\ref{PhiX}), we have
\begin{align}\label{XoverPiece}
\mathbb{P}\Big(\max_{t_j \in J_{k,m_h}^\delta}|X_h(t_j)|> \theta\Big)=O(h)
\end{align}
uniformly in $k$ as $h\to0$. Note that here and below we for brevity omit to indicate that the maxima (or minima, respectively) run over $j = 1,\ldots, N_n^*$ (i.e. over all $t_j \in {\mathbb T}^\delta_h$). It follows that as $h \to 0$
\begin{align}\label{G4}
\sum_{k=1}^{m_h}\log{\Big(1-\mathbb{P}\Big(\max_{t_j\in J_{k,m_h}^\delta}|X_h(t_j)|> \theta\Big)\Big)}=(1+o(1))\sum_{k=1}^{m_h}\mathbb{P}\Big(\max_{t_j\in J_{k,m_h}^\delta}|X_h(t_j)|> \theta\Big).
\end{align}
%
It follows from (\ref{PieceSum}) and its version with the $\max$ over the discrete set replaced by the $\sup$ over $t \in S_i^h$ (see discussion given below (\ref{PieceSum2})), that for any $\epsilon > 0$ there exists thresholds for $h$, $\gamma$ and the norm of partitions, such that 
\begin{align*}
0&\leq \mathbb{P}\Big(\sup_{t\in J_{k,m_h}^\delta}X_h(t)>\theta\Big)-\mathbb{P}\Big(\max_{t_j\in J_{k,m_h}^\delta}X_h(t_j)>\theta\Big)\\
&\leq \sum_{i=1}^{N_h}\Big[\mathbb{P}\Big(\sup_{t\in S_i^h}X_h(t)>\theta\Big)-\mathbb{P}\Big(\max_{t_j\in S_i^h}X_h(t_i)>\theta\Big)\Big]\\
&\leq\epsilon\;\theta^{2r/\alpha}\Psi(\theta)H_\alpha^{(r)} \int_{J_{k,m_h}^\delta}\|D_s^hM_s^h\|_rds,
\end{align*}
provided $h$, $\gamma$ and the norm of partitions are smaller then their respective thresholds. Similarly, (\ref{PieceSum2}) and its corresponding `continuous' version imply that for $h$ and $\gamma$ smaller than their respective thresholds indicated in {\bf Part 1}, we have
\begin{align*}
0\leq \mathbb{P}\Big(\inf_{t\in J_{k,m_h}^\delta}X_h(t)<-\theta\Big)-\mathbb{P}\Big(\min_{t_i\in J_{k,m_h}^\delta}X_h(t_i)<-\theta\Big)\leq\epsilon\,\theta^{2r/\alpha}\Psi(\theta)H_\alpha^{(r)} \int_{J_{k,m_h}^\delta}\|D_s^hM_s^h\|_rds.
\end{align*}
Consequently, if $h$ and $\gamma$ and $\max_{1\leq k \leq m_h}V_r(J^\delta_{k,m_h})$ are small enough, we have
{\allowdisplaybreaks
\begin{align}\label{SupDiff}
0&\leq \mathbb{P}\Big(\sup_{t\in \bigcup_{k\leq m_h}J_{k,m_h}^\delta}|X_h(t)|> \theta\Big)-\mathbb{P}\Big(\max_{t_j\in \bigcup_{k\leq m_h}J_{k,m_h}^\delta}|X_h(t_j)|> \theta\Big)\nonumber\\
&\leq\sum_{k=1}^{m_h}\Big[\mathbb{P}\Big(\sup_{t\in J_{k,m_h}^\delta}|X_h(t)|> \theta\Big)-\mathbb{P}\Big(\max_{t_j\in J_{k,m_h}^\delta}|X_h(t_j)|> \theta\Big)\Big]\nonumber\\
&\leq\sum_{k=1}^{m_h}\Big[\mathbb{P}\Big(\sup_{t\in J_{k,m_h}^\delta}X_h(t)> \theta\Big)+\mathbb{P}\Big(\inf_{t\in J_{k,m_h}^\delta}X_h(t)<- \theta\Big)-\mathbb{P}\Big(\max_{t_j\in J_{k,m_h}^\delta}X_h(t_j)> \theta\Big)\nonumber\\
&\hspace{1cm}-\mathbb{P}\Big(\min_{t_j\in J_{k,m_h}^\delta}X_h(t_j)<- \theta\Big)\Big]\nonumber\\
&\leq2\epsilon\,\theta^{2r/\alpha}\Psi(\theta)H_\alpha^{(r)} \int_{\bigcup_{k\leq m_h}J_{k,m_h}^\delta}\|D_s^hM_s^h\|_rds\nonumber\\
&\leq2\epsilon\,\theta^{2r/\alpha}\Psi(\theta)H_\alpha^{(r)} \int_{\mathcal{M}_h}\|D_s^hM_s^h\|_rds.
\end{align}
}
To see the order of the upper bound in (\ref{SupDiff}), by the dominated convergence theorem (and using our assumption on the behavior of $D^h_s$) we have
\begin{align}\label{Integration}
\frac{h^r\int_{\mathcal {M}_h}\|D_s^hM_s^h\|_rds}{\int_{\mathcal {M}_1}\|D_s^0M_s^1\|_rds} = \frac{\int_{\mathcal {M}_1}\|D_{s/h}^hM_s^1\|_rds}{\int_{\mathcal {M}_1}\|D_s^0M_s^1\|_rds}\to 1, \quad \textrm{as} \quad h\rightarrow 0.
\end{align}

As a result of (\ref{PhiX}) and (\ref{Integration}), we can write for $\max_{1\leq k \leq m_h}V_r(J^\delta_{k,m_h})$ small enough that
\begin{align}\label{G2}
\mathbb{P}\Big(\sup_{t\in \bigcup_{k\leq m_h}J_{k,m_h}^\delta}|X_h(t)|\leq \theta\Big)=\mathbb{P}\Big(\max_{t_j\in \bigcup_{k\leq m_h}J_{k,m_h}^\delta}|X_h(t_j)|\leq \theta\Big)+ o(1)
\end{align}
and
\begin{align}\label{G2T}
\sum_{k=1}^{m_h}\mathbb{P}\Big(\sup_{t\in J_{k,m_h}^\delta}|X_h(t)|> \theta\Big)=\sum_{k=1}^{m_h}\mathbb{P}\Big(\max_{t_j\in J_{k,m_h}^\delta}|X_h(t_j)|> \theta\Big)+ o(1),
\end{align}
as $\gamma,h\to0.$\\

{\bf Part 5.} Here we find an upper bound for the difference
\begin{align}\label{PDiff}
\bigg|\mathbb{P}\Big(\max_{t_j\in\bigcup_{k\leq m_h}J_{k,m_h}^\delta}|X_h(t_j)|\leq \theta\Big)-\prod_{k\leq m_h}\mathbb{P}\Big(\max_{t_j\in J_{k,m_h}^\delta}|X_h(t_j)|\leq \theta\Big)\bigg|.
\end{align}
This step uses similar ideas as in the proof of Lemma 5.1 in Berman (1971).  \\

Define a probability measure $\tilde{\mathbb{P}}$ such that for any $x_{t_j}\in \mathbb{R}$ with $t_j\in\bigcup_{k\leq m_h}J_{k,m_h}^\delta$,
\begin{align*}
\tilde{\mathbb{P}}\Big(X_h(t_j)\leq x_{t_j}, t_j\in\bigcup_{k\leq m_h}J_{k,m_h}^\delta\Big)=\prod_{k\leq m_h}\mathbb{P}\Big(X_h(t_j)\leq x_{t_j}, t_j\in J_{k,m_h}^\delta\Big),
\end{align*}
i.e., under $\tilde{\mathbb{P}}$ the vectors $(X_h(t_i):t_i\in J_{k,m}^\delta \cap {\mathbb T}^\delta_h)$ and $(X_h(t_j):t_j\in J_{k^\prime,m}^\delta \cap {\mathbb T}^\delta_h)$ independent for $k\neq k^\prime$. 
%
By Lemma \ref{BickelLemma}, the difference in (\ref{PDiff}) can be bounded by
\begin{align}\label{TripleSum}
&8\sum\limits_{\substack{k\leq m_h,k^{\prime}\leq m_h,\\ k\neq k^\prime}}\sum_{t_i\in J_{k,m_h}^\delta}\sum_{t_j\in J_{k^\prime,m_h}^\delta}\int_0^{|r_h(t_i,t_j)|}\phi(\theta,\theta,\lambda)d\lambda \nonumber\\
&=8\sum\limits_{\substack{k\leq m_h,k^{\prime}\leq m_h,\\ k\neq k^\prime}}\sum_{t_i\in J_{k,m_h}^\delta}\sum_{t_j\in J_{k^\prime,m_h}^\delta}\int_0^{|r_h(t_i,t_j)|}\frac{1}{2\pi(1-\lambda^2)^{1/2}}\exp{\bigg(-\frac{\theta^2}{1+\lambda}\bigg)}d\lambda\nonumber\\
&\leq8\sum\limits_{\substack{k\leq m_h,k^{\prime}\leq m_h,\\ k\neq k^\prime}}\sum_{t_i\in J_{k,m_h}^\delta}\sum_{t_j\in J_{k^\prime,m_h}^\delta}\frac{|r_h(t_i,t_j)|}{2\pi(1-(r_h(t_i,t_j))^2)^{1/2}}\exp{\bigg(-\frac{\theta^2}{1+|r_h(t_i,t_j)|}\bigg)}.
\end{align}
(Note that here the notation $\{t_i\in J_{k,m_h}^\delta\}$ is a shortcut for $\{ t_i\in J_{k,m_h}^\delta \cap {\mathbb T}^\delta_h\}$, and similarly for $t_j$.) For $t_i\in J_{k,m_h}^\delta$ and $t_j\in J_{k^\prime,m_h}^\delta$ with $k\neq k^\prime$, it follows from the uniform  boundedness of the curvature of the growing manifold that there exists a positive real $\varsigma$ such that $\|t_i-t_j\|\geq \varsigma$, uniformly for all $0<h\leq 1$. (Similar arguments have been used above already.) Thus we obtain from assumption (\ref{SupGauss1}) that there exists $\eta > 0$  dependent on $\varsigma$ such that
\begin{align}\label{RijBound}
|r_h(t_i,t_j)|<\eta < 1
\end{align}
uniformly in $t_i\in J_{k,m_h}^\delta$ and $t_j\in J_{k^\prime,m_h}^\delta$ with $k\neq k^\prime$ and $0<h\leq 1$.\\

Let $\omega$ be an arbitrary number satisfying
\begin{align*}
0<\omega<\frac{2}{(1+\eta)}-1.
\end{align*}
We take $\gamma = v(h^{-1})^{1/3r}$ in what follows and divide the triple sum in (\ref{TripleSum}) into two parts: In one part the indices $i,j$ are constrained such that $\|t_i-t_j\|<(N_h^*)^{\omega/r}\gamma \theta^{-2/\alpha}$ and for the other part the indices take the remaining values. In the first part, the number of summands in the triple sum is of the order $O((N_h^*)^{\omega+1})$, because there is a total of $O(N^*_h)$ points and for each of this points we have to consider at most  $O((N_h^*)^{\omega})$ pairs. Taking (\ref{RijBound}) into account, we get the order of the sum in the first part of (\ref{TripleSum})
\begin{align*}
O\bigg((N_h^*)^{\omega+1}\exp\bigg\{-\frac{\theta^2}{1+\eta}\bigg\}\bigg)&=O\bigg(\bigg(\frac{\theta^{2r/\alpha}}{h^r\gamma^r}\bigg)^{1+\omega}\exp\bigg\{-\frac{\theta^2}{1+\eta}\bigg\}\bigg)\\
&=O\bigg(\bigg(\frac{(\log{h^{-1}})^{r/\alpha}}{h^r\gamma^r}\bigg)^{1+\omega}\exp\bigg\{-\frac{2r\log{h^{-1}}}{1+\eta}\bigg\}\bigg)\\
&=O\bigg(h^{\frac{2r}{1+\eta}-r(1+\omega)}\Big(\log{h^{-1}}\Big)^{\frac{(1+\omega) r}{\alpha}}\Big(v(h^{-1})\Big)^{-\frac{(1+\omega)\gamma}{3r}}\bigg),
\end{align*}
which tends to zero as $h$ approaches zero.\\

Then we consider the second part of (\ref{TripleSum}) with $\|t_i-t_j\|\geq (N_h^*)^{\omega/r}\gamma \theta^{-2/\alpha}$. Noticing $(1+|r^h(t_i,t_j)|)^{-1}\geq 1-|r^h(t_i,t_j)|$ and (\ref{RijBound}), we can have the following bound for the second part of (\ref{TripleSum}):
\begin{align}\label{Bound2sum1}
8\exp(-\theta^2)\sum\limits_{\substack{k\leq m_h,k^{\prime}\leq m_h,\\ k\neq k^\prime}}\sum\limits_{\substack{t_i\in J_{k,m_h}^\delta, t_j\in J_{k^\prime,m_h}^\delta, \\ \|t_i-t_j\|\geq(N_h^*)^{\omega/r}\gamma \theta^{-2/\alpha}}}\frac{|r^h(t_i,t_j)|}{2\pi(1-\eta^2)^{1/2}}\exp{(\theta^2|r^h(t_i,t_j)|)}.
\end{align}

By (\ref{SupGauss2}) and the fact that $\theta^2=O(\log{h^{-1}})$, we have that $\sup_{\|t_i-t_j\|\geq(N_h^*)^{\omega/r}\gamma \theta^{-2/\alpha}} \theta^2|r^h(t_i,t_j)| \rightarrow 0$ as $h\rightarrow 0$. Hence (\ref{Bound2sum1}) is of the order of
\begin{align}\label{Bound2sum2}
h^{2r}\sum\limits_{\substack{k\leq m_h,k^{\prime}\leq m_h,\\ k\neq k^\prime}}\sum\limits_{\substack{t_i\in J_{k,m_h}^\delta, t_j\in J_{k^\prime,m_h}^\delta, \\ \|t_i-t_j\|\geq(N_h^*)^{\omega/r}\gamma \theta^{-2/\alpha}}}|r^h(t_i,t_j)|
\end{align}
When $h$ is sufficiently small we have
\begin{align}
\sup_{\|t_i-t_j\|\geq(N_h^*)^{\omega/r}\gamma \theta^{-2/\alpha}}|r^h(t_i,t_j)|\leq \frac{v((N_h^*)^{\omega/r}\gamma \theta^{-2/\alpha})}{[\log((N_h^*)^{\omega/r}\gamma \theta^{-2/\alpha})]^{2r/\alpha}}.
\end{align}
Therefore, due to (\ref{SupGauss2}), (\ref{Bound2sum2}) is of the order
\begin{align*}
&O\left(h^{2r}(N_h^*)^2\frac{v((N_h^*)^{\omega/r}\gamma \theta^{-2/\alpha})}{[\log((N_h^*)^{\omega/r}\gamma \theta^{-2/\alpha})]^{2/\alpha}}\right)\\
&=O\left(\frac{[\log(h^{-1})]^{2r/\alpha}v((N_h^*)^{\omega/r}\gamma \theta^{-2/\alpha})}{\bigg[\log \bigg(h^{-\omega} \Big([\log (h^{-1})]^{1/\alpha}v(h^{-1})^{-1/3r}\Big)^{\omega-1}\bigg)\bigg]^{2r/\alpha}v(h^{-1})^{2/3}}\right)\\
&=o(1) \quad \textrm{as} \quad h\rightarrow 0.
\end{align*}
Now we have proved (\ref{TripleSum}) tends to zero as $h$ goes to zero.  So we have with this choice of $\gamma$ that as $h\to0$ 
\begin{align}\label{G3}
\mathbb{P}\Big(\max_{t_j\in\bigcup_{k\leq m_h}J_{k,m_h}^\delta}|X_h(t_j)|\leq \theta\Big)=\prod_{k\leq m_h}\mathbb{P}\Big(\max_{t_j\in J_{k,m_h}^\delta}|X_h(t_j)|\leq \theta\Big)+o(1),
\end{align}
where $\delta>0$ is fixed small enough.\\

{\bf Final step:} Now we collect all the approximations above, including (\ref{G1}), (\ref{G1T}), (\ref{G2}), (\ref{G2T}), (\ref{G3}), (\ref{G4}), (\ref{G7}), (\ref{PhiX}) and (\ref{Integration}). We have for $\delta > 0$ and $\sup_{h \in (0,1]}\max_{1\leq i \leq N_h}V_r(S_i^h)$ fixed and chosen small enough, and $\gamma = v(h^{-1})^{1/3r}$ that as $h \to 0$
{\allowdisplaybreaks
\begin{align*}
&\mathbb{P}\Big(\sup_{t\in\mathcal {M}_h}|X_h(t)|\leq \theta\Big)\\
&\overset{(\ref{G1})}{=\!=}\mathbb{P}\Big(\sup_{t\in\bigcup_{k\leq m_h}J_{k,m_h}^\delta}|X_h(t)|\leq \theta\Big)+ o(1)\\
&\overset{(\ref{G2})}{=\!=}\mathbb{P}\Big(\max_{t_j\in\bigcup_{k\leq m_h}J_{k,m_h}^\delta}|X_h(t_j)|\leq \theta\Big)+o(1)\\
&=\!=\mathbb{P}\bigg(\bigcap_{k\leq m_h}\Big(\max_{t_j\in J_{k,m_h}^\delta}|X_h(t_j)|\leq \theta\Big)\bigg)+o(1)\\
&\overset{(\ref{G3})}{=\!=}\prod_{k\leq m_h}\mathbb{P}\Big(\max_{t_j\in J_{k,m_h}^\delta}|X_h(t_j)|\leq \theta\Big)+o(1)\\
&=\!=\exp\bigg\{\sum_{k\leq m_h}\log{\Big(1-\mathbb{P}\Big(\max_{t_j\in J_{k,m_h}^\delta}|X_h(t_j)|> \theta\Big)\Big)}\bigg\}+o(1)\\
&\overset{(\ref{G4})}{=\!=}\exp\bigg\{-(1+ o(1))\sum_{k\leq m_h}\mathbb{P}\Big(\max_{t_j\in J_{k,m_h}^\delta}|X_h(t_j)|> \theta\Big)\bigg\}+ o(1)\\
&\overset{(\ref{G2T})}{=\!=}\exp\bigg\{-(1+o(1))\bigg[\sum_{k\leq m_h}\mathbb{P}\Big(\sup_{t\in J_{k,m_h}^\delta}|X_h(t)|> \theta\Big)- o(1)\bigg]\bigg\}+o(1)\\
&\overset{(\ref{G1T})}{=\!=}\exp\bigg\{-2(1+o(1)) \sum_{k\leq m_h}\mathbb{P}\Big(\sup_{t\in J_{k,m_h}}X_h(t)> \theta\Big) \bigg\} + o(1)\\
%
%
&\overset{(\ref{G7})}{=\!=}\exp\bigg\{-2(1+o(1)) \;\theta^{2r/\alpha}\Psi(\theta)H_\alpha^{(r)} \int_{\mathcal {M}_h}\|D_s^hM_s^h\|_rds \bigg\}+o(1).
\end{align*}
}
This completes our proof by using (\ref{PhiX}), (\ref{Integration}).

\end{section}

\begin{section}{Miscellaneous}\label{misc}

In this section we collect some miscellaneous results and definitions that are needed in the above proof. We present them in a separate section in order to not interrupt the flow of the above proof.\\

{\em Definition of generalized Pickands constant} (following Piterbarg and Stamatovich, 2001).  For $0<\alpha\leq 2$, let $\chi_\alpha(t)$ be a continuous Gaussian field with $\mathbb{E}\chi_\alpha(t)=-\|t\|^\alpha$ and $\Cov(\chi_\alpha(t), \chi_\alpha(s))=\|t\|^\alpha+\|s\|^\alpha-\|t-s\|^\alpha$ where $s,t\in \mathbb{R}^n$. The existence of such a field $\chi_\alpha(t)$ follows from Mikhaleva and Piterbarg (1997).\\

For any compact set $\mathcal{T}\subset\mathbb{R}^n$ define
\begin{align*}
H_\alpha(\mathcal{T})=\mathbb{E}\exp\Big(\sup_{t\in \mathcal{T}}\chi_\alpha(t)\Big).
\end{align*}

Let $D$ be a non-degenerated $n \times n$ matrix. For a set $A \subset \R^n$ let $DA = \{Dx, x\in A\}$ denote the image of $A$ under $D$. For any $q>0$, we let
\begin{align*}
[0,q]^r = \{t: t_i \in [0,q], i= 1,\cdots, r;\, t_i=0, i=r+1, \cdots, n\},
\end{align*}
denote a cube of dimension $r$ generated by the first $r$ coordinates in $\mathbb{R}^n$. Let
\begin{align*}
H_\alpha^{D\mathbb{R}^r} := \lim_{q\rightarrow\infty}\frac{H_\alpha(D[0,q]^r)}{\lambda_r(D[0,q]^r)},
\end{align*}
where $\lambda_r$ denotes Lebesgue measure in $\R^r$. It is known that $H_\alpha^{D\mathbb{R}^r}$ exists and $0< H_\alpha^{D\mathbb{R}^r} <\infty$ (see Belyaev
and Piterbarg, 1972). With $D = I$ the unit matrix, we write $H_\alpha^{(r)}=H_\alpha^{I\mathbb{R}^r}$. Since by definition the random field $\chi_\alpha(\cdot)$ is isotropic, $H_\alpha^{D\mathbb{R}^r} = H_\alpha^{(r)}$ for any orthogonal matrix $D$. The constant $H_\alpha:=H_\alpha^{(n)}$ is the (generalized) {\em Pickands constant}.\\

Further, for positive integers $l$ and $\gamma>0$, let
\begin{align*}
C^r(l,\gamma)&=\{t\gamma: t_i\in [0,l] \cap \mathbb{N}_0, i=1,\cdots, r; t_i=0, i=r+1, \cdots, n\} \\[4pt]
&= \gamma\;\big([0,l]^r\; \cap {\mathbb N}_0^n\big),
\end{align*}
let $H_\alpha^{D,(r)}(l,\gamma)=H_\alpha (DC^r(l,\gamma))$. Again, for $D$ orthogonal and due to isotropy of $\chi_\alpha(\cdot)$, we just write $H_\alpha^{(r)}(l,\gamma)=H_\alpha^{D,(r)}(l,\gamma)$. We let 
$$H_\alpha^{(r)}(\gamma)= {\displaystyle \lim_{l\rightarrow\infty}}\frac{H_\alpha^{(r)}(l,\gamma)}{l^r}$$ 
assuming this limit exists, and  for $r = n$ we simply write $H_\alpha(l,\gamma)$ and $H_\alpha(\gamma) $ instead of $H_\alpha^{(n)}(l,\gamma)$ and $H_\alpha^{(n)}(\gamma)$, respectively. 
%
%
We have the following lemma from Bickel and Rosenblatt (1973b).
\begin{lemma}\label{HGamma}
$H_\alpha^{(r)}=\lim_{\gamma\rightarrow0}\frac{H_\alpha^{(r)}(\gamma)}{\gamma^r}$.
\end{lemma}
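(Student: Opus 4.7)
The plan is a squeeze argument: I would sandwich $H_\alpha^{(r)}(\gamma)/\gamma^r$ between two quantities, both tending to $H_\alpha^{(r)}$ as $\gamma\to 0$, exploiting the inclusion $C^r(l,\gamma)\subset [0,l\gamma]^r$ together with almost-sure continuity of the field $\chi_\alpha$.

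For the upper bound, since $C^r(l,\gamma)\subset[0,l\gamma]^r$, monotonicity of the supremum immediately gives
$$H_\alpha^{(r)}(l,\gamma)=\mathbb{E}\exp\Big(\sup_{t\in C^r(l,\gamma)}\chi_\alpha(t)\Big)\le \mathbb{E}\exp\Big(\sup_{t\in[0,l\gamma]^r}\chi_\alpha(t)\Big)=H_\alpha([0,l\gamma]^r).$$
Dividing by $(l\gamma)^r$ and letting $l\to\infty$ with $\gamma$ fixed yields $H_\alpha^{(r)}(\gamma)/\gamma^r\le H_\alpha^{(r)}$ for every $\gamma>0$, so $\limsup_{\gamma\to 0} H_\alpha^{(r)}(\gamma)/\gamma^r\le H_\alpha^{(r)}$.

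For the reverse direction I would exploit the modulus of continuity of $\chi_\alpha$. Writing
$$\sup_{t\in[0,l\gamma]^r}\chi_\alpha(t)\le \max_{t\in C^r(l,\gamma)}\chi_\alpha(t)+\omega_\gamma,\qquad \omega_\gamma:=\sup_{\|s-t\|\le \gamma\sqrt{r},\,s,t\in[0,l\gamma]^r}|\chi_\alpha(s)-\chi_\alpha(t)|,$$
and splitting the expectation over the events $\{\omega_\gamma\le\epsilon\}$ and $\{\omega_\gamma>\epsilon\}$, one obtains a bound of the form
$$H_\alpha([0,l\gamma]^r)\le e^{\epsilon}\,H_\alpha^{(r)}(l,\gamma)+\big(\mathbb{E}e^{2\sup_{[0,l\gamma]^r}\chi_\alpha}\big)^{1/2}\big(\mathbb{P}(\omega_\gamma>\epsilon)\big)^{1/2}.$$
Since $\mathbb{E}|\chi_\alpha(s)-\chi_\alpha(t)|^2=2\|s-t\|^\alpha$, a Dudley-type entropy bound gives that $\mathbb{P}(\omega_\gamma>\epsilon)$ tends to zero quickly as $\gamma\to 0$, and the negative drift $\mathbb{E}\chi_\alpha(t)=-\|t\|^\alpha$ together with Borell--TIS controls the exponential-moment factor. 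Dividing by $(l\gamma)^r$, then letting $l\to\infty$, $\epsilon\to 0$, and finally $\gamma\to 0$ delivers the matching lower bound $H_\alpha^{(r)}\le \liminf_{\gamma\to 0}H_\alpha^{(r)}(\gamma)/\gamma^r$.

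The main technical obstacle is that both the remainder $\mathbb{P}(\omega_\gamma>\epsilon)$ and the moment $\mathbb{E}e^{2\sup_{[0,l\gamma]^r}\chi_\alpha}$ involve suprema over a cube whose volume grows with $l$, so a naive estimate would blow up when taking $l\to\infty$. The standard remedy, which is the content of Bickel and Rosenblatt (1973b), is to partition $[0,l\gamma]^r$ into unit sub-cubes and use the fact that the negative drift $-\|t\|^\alpha$ makes the contribution from sub-cubes far from the origin exponentially small via a union bound. Once this subadditive, Pickands-type localisation is in place, the estimates on $\omega_\gamma$ and the exponential moment reduce to a single unit cube where they are routine, and the squeeze closes.
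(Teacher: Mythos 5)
Your upper bound is clean and correct: the inclusion $C^r(l,\gamma)\subset[0,l\gamma]^r$ immediately gives $H_\alpha^{(r)}(l,\gamma)\le H_\alpha([0,l\gamma]^r)$, and dividing by $(l\gamma)^r$ and letting $l\to\infty$ yields $H_\alpha^{(r)}(\gamma)/\gamma^r\le H_\alpha^{(r)}$, so $\limsup_{\gamma\to 0}H_\alpha^{(r)}(\gamma)/\gamma^r\le H_\alpha^{(r)}$. For comparison, the paper itself offers no proof: it cites Bickel and Rosenblatt (1973b) directly.

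The lower bound, however, has a genuine gap that your stated remedy does not fix. The Cauchy--Schwarz step produces the factor $\bigl(\mathbb{E}e^{2\sup_{[0,l\gamma]^r}\chi_\alpha}\bigr)^{1/2}$, and this quantity blows up far faster than the volume you are dividing by. Pointwise, $\chi_\alpha(t)\sim N(-\|t\|^\alpha,\,2\|t\|^\alpha)$, so
\begin{equation*}
\mathbb{E}\,e^{2\chi_\alpha(t)}=\exp\bigl(-2\|t\|^\alpha+\tfrac{1}{2}\cdot 4\cdot 2\|t\|^\alpha\bigr)=e^{2\|t\|^\alpha},
\end{equation*}
whereas $\mathbb{E}\,e^{\chi_\alpha(t)}=1$. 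Doubling the exponent therefore \emph{reverses} the sign of the effective drift: the very mechanism that keeps $H_\alpha(\mathcal{T})=\mathbb{E}\,e^{\sup\chi_\alpha}$ of order $V_r(\mathcal{T})$ is broken for the squared field, and $\mathbb{E}\,e^{2\sup_{[0,l\gamma]^r}\chi_\alpha}$ grows like $\exp\bigl(c(l\gamma)^\alpha\bigr)$, i.e.\ super-polynomially in $l$. At the same time $\mathbb{P}(\omega_\gamma>\epsilon)\to 1$ as $l\to\infty$ for any fixed $\gamma,\epsilon$ (more sub-cubes give more chances for a rare oscillation). So the correction term you want to kill after dividing by $(l\gamma)^r$ actually diverges.

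The remedy you describe --- partitioning $[0,l\gamma]^r$ into unit sub-cubes $Q_i$ and invoking the negative drift to make far sub-cubes contribute little --- does not rescue the Cauchy--Schwarz step, because the same sign-flip occurs locally: $\mathbb{E}\,e^{2\sup_{Q_i}\chi_\alpha}\asymp e^{2d_i^\alpha}$ for a unit cube at distance $d_i$ from the origin, while $\mathbb{P}(\omega_i>\epsilon)$ is controlled only by $\gamma$ and does not decay in $d_i$. The product still blows up when summed over $i$. What actually lets Pickands (1969) and Bickel--Rosenblatt (1973b) close this argument is that they work with the \emph{tail probabilities} $\mathbb{P}(\sup>x)$ and $\mathbb{P}(\max>x)$ rather than with the exponential moment directly: the difference $\mathbb{P}(\sup>x,\max\le x)$ is additive over sub-cubes and can be bounded by a per-cube crossing probability that is uniformly $o(1)$ in $\gamma$ relative to $\mathbb{P}(\sup_{Q_i}>x)$; one then integrates against $e^x$ via $\mathbb{E}\,e^A=\int e^x\,\mathbb{P}(A>x)\,dx$ to transfer the estimate back to $H_\alpha$. (This is, in effect, what the authors' own unpublished computation for the case $\alpha=2$, $r=1$ does, using the explicit representation $\tilde\chi(t)=\sqrt{2}tZ-t^2$.) Without a decoupling of $\sup$ from the modulus of continuity of this type --- not available from the $L^2$--$L^2$ H\"older split --- the lower half of the squeeze does not close.
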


In the following we present further results for Gaussian fields that are used in the proofs.

\begin{lemma}\label{Slepian} (\textbf{Slepian's lemma; see Slepian, 1962})
Let $\{X_t, \,t \in T\}$ and $\{Y_t, \,t \in T\}$ be Gaussian processes satisfying the assumptions of Theorem \ref{Borel} with the same mean functions. If the covariance functions $r_X(s,t)$ and $r_Y (s, t)$ meet the relations
\begin{align*}
r_X(t, t) \equiv r_Y (t, t),\hspace{0.5cm} t \in T \hspace{2cm} r_X(s, t) \leq r_Y (s,t), \hspace{0.5cm}t, s \in T,
\end{align*}
%
%
%
then for any $x$
\begin{align*}
\mathbb{P}\Big\{\sup_{t\in T}X_t<x\bigg\}\leq \mathbb{P}\bigg\{\sup_{t\in T}Y_t<x\Big\}.\\
\end{align*}
\end{lemma}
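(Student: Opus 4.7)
The plan is to first reduce the assertion to the finite-dimensional case via separability and monotone convergence, and then to establish the finite-dimensional Slepian inequality by a smooth interpolation between the two Gaussian laws.

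For the reduction, by separability of the processes $X$ and $Y$ there exists a countable $T_0 \subset T$ with $\sup_{t \in T} X_t = \sup_{t \in T_0} X_t$ almost surely (and similarly for $Y$). Writing $T_0$ as an increasing union of finite sets $T_0^{(k)}$ and using monotone convergence on the events $\{\sup_{t \in T_0^{(k)}} X_t < x\}$, it suffices to establish the inequality in the finite-dimensional setting: given centered Gaussian vectors $X \sim N(0, \Sigma^X)$ and $Y \sim N(0, \Sigma^Y)$ in $\mathbb{R}^n$ with $\Sigma^X_{ii} = \Sigma^Y_{ii}$ and $\Sigma^X_{ij} \le \Sigma^Y_{ij}$ for $i \ne j$, show that $\mathbb{P}(\max_i X_i < x) \le \mathbb{P}(\max_i Y_i < x)$. (Common means cancel by translation.)

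For the finite-dimensional step, I would interpolate: set $\Sigma(\rho) = (1-\rho)\Sigma^X + \rho \Sigma^Y$ for $\rho \in [0,1]$, a valid covariance matrix with $\rho$-independent diagonal. Denote by $\phi_{\Sigma(\rho)}$ the Gaussian density on $\mathbb{R}^n$ and define
\[
F(\rho) = \int_{(-\infty, x)^n} \phi_{\Sigma(\rho)}(u)\, du.
\]
The crux is the heat-equation identity
\[
\frac{\partial \phi_\Sigma(u)}{\partial \sigma_{ij}} = \frac{\partial^2 \phi_\Sigma(u)}{\partial u_i \partial u_j}, \qquad i \ne j,
\]
which can be verified on the Fourier side: differentiating $\exp(-\tfrac{1}{2} \xi^\top \Sigma \xi)$ in $\sigma_{ij}$ brings down $-\xi_i \xi_j$, which is the Fourier image of $\partial_{u_i}\partial_{u_j}$. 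Since the diagonal of $\Sigma(\rho)$ is fixed, the chain rule gives
\[
F'(\rho) = \sum_{i<j} (\Sigma^Y_{ij} - \Sigma^X_{ij}) \int_{(-\infty,x)^n} \frac{\partial^2 \phi_{\Sigma(\rho)}}{\partial u_i \partial u_j}\, du,
\]
and carrying out the $u_i$ and $u_j$ integrations converts the inner integral into the joint density of $(Z_i, Z_j)$ at $(x, x)$ times the conditional probability $\mathbb{P}(Z_k < x, k \ne i, j \mid Z_i = Z_j = x)$, with $Z \sim N(0, \Sigma(\rho))$. Both factors are nonnegative, and so is the prefactor by hypothesis, hence $F'(\rho) \ge 0$ and $F(0) \le F(1)$, which is the desired finite-dimensional inequality.

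The only real obstacle is technical bookkeeping: justifying differentiation under the integral sign (which is routine given Gaussian tails, with a small regularization $\Sigma(\rho) + \varepsilon I$ to handle any possible degeneracy and then $\varepsilon \downarrow 0$), and handling the passage from countable to arbitrary $T$ cleanly via the separability convention implicit in the statement of Theorem~\ref{Borel}. Neither step presents essential difficulty once the partial-derivative identity above is in hand.
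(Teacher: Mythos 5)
The paper does not prove this lemma; it states it as a classical fact and cites Slepian (1962). Your argument is the standard Gaussian interpolation (``smart path'') proof, and it is essentially correct: the Fourier-side identity $\partial\phi_\Sigma/\partial\sigma_{ij}=\partial^2\phi_\Sigma/\partial u_i\partial u_j$ for $i\ne j$ is right (the factor-of-two bookkeeping works out because $\sigma_{ij}=\sigma_{ji}$ appears twice in the quadratic form while the $\tfrac12$ in the exponent cancels it), and integrating the mixed partial over the box produces a nonnegative restricted density, so $F'(\rho)\ge 0$ follows from $\Sigma^Y_{ij}\ge\Sigma^X_{ij}$. The reduction from $T$ to finite sets via separability and monotone convergence, and the regularization $\Sigma(\rho)+\varepsilon I$ to handle degeneracy, are both appropriate.

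One small imprecision worth flagging: since the common mean function $\mu(t)$ need not be constant, ``means cancel by translation'' actually reduces the problem to centered processes with a \emph{spatially varying} threshold, i.e.\ to the box $\prod_i(-\infty,x-\mu(t_i))$ rather than $(-\infty,x)^n$. Your interpolation argument handles this without change---the two inner integrations still produce the nonnegative value of $\phi_{\Sigma(\rho)}$ on the face $u_i=x-\mu(t_i),\,u_j=x-\mu(t_j)$---but the write-up should state the general box so that the reduction is actually used.
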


We also need this result from Piterbarg (1996). Recall that $\Psi$ is defined at the beginning of section~\ref{proof}. 

\begin{lemma}\label{Zbigniew}(\textbf{Lemma 6.1 of Piterbarg, 1996})
Let $X(t)$ be a continuous homogeneous Gaussian field where $t\in \mathbb{R}^n$ with expected value $\mathbb{E}X(t)=0$ and covariance function $r(t)$ satisfying
\begin{align*}
r(t)=\mathbb{E}(X(t+s)X(s))=1-\|t\|^\alpha+o(\|t\|^\alpha).
\end{align*}
Then for any compact set $\mathcal{T}\subset\mathbb{R}^n$ 
\begin{align*}
\mathbb{P}\Big(\sup_{t\in u^{-2/\alpha}\mathcal{T}}X(t)>u\Big)=\Psi(u)H_\alpha(\mathcal{T})(1+o(1)) \quad \text{as }\,u\rightarrow \infty.\\
\end{align*}
\end{lemma}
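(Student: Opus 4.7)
The plan is to employ the classical Pickands conditioning argument. Writing $K_u = u^{-2/\alpha}\mathcal{T}$, I would first decompose $X(t) = r(t)X(0) + Y(t)$, where $Y$ is a centered Gaussian field independent of $X(0)$ with covariance $\rho(s,t) = r(s-t) - r(s)r(t)$. Conditioning on $X(0)$ gives
\begin{align*}
\mathbb{P}\Big(\sup_{t \in K_u} X(t) > u\Big) = \int_{-\infty}^{\infty} \mathbb{P}\Big(\sup_{t \in K_u} \big[r(t)\, v + Y(t)\big] > u\Big)\, \phi(v)\,dv.
\end{align*}
The substitution $v = u + w/u$ extracts the desired factor $\Psi(u) = \phi(u)/u$, since $\phi(v)\,dv = u^{-1}\phi(u)\, e^{-w - w^2/(2u^2)}\,dw$, and the rescaling $t = u^{-2/\alpha}\tau$ with $\tau \in \mathcal{T}$ reduces the inner probability to
\begin{align*}
I_u(w) := \mathbb{P}\Big(\sup_{\tau \in \mathcal{T}} \big\{u\, Y(u^{-2/\alpha}\tau) + u\big[r(u^{-2/\alpha}\tau)(u + w/u) - u\big]\big\} > 0\Big).
\end{align*}

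Next I would identify the limit of $I_u(w)$. The expansion $r(u^{-2/\alpha}\tau) = 1 - u^{-2}\|\tau\|^\alpha + o(u^{-2})$, uniform in $\tau \in \mathcal{T}$, shows that the deterministic drift $u[r(u^{-2/\alpha}\tau)(u+w/u)-u]$ tends to $w - \|\tau\|^\alpha$. A direct covariance calculation yields
\begin{align*}
u^2\,\Cov\big(Y(u^{-2/\alpha}\sigma),\,Y(u^{-2/\alpha}\tau)\big) \;\longrightarrow\; \|\sigma\|^\alpha + \|\tau\|^\alpha - \|\sigma - \tau\|^\alpha
\end{align*}
uniformly on $\mathcal{T}\times\mathcal{T}$, so the centered process $u\,Y(u^{-2/\alpha}\cdot)$ converges weakly in $C(\mathcal{T})$ to the centered Gaussian field $\chi_\alpha(\cdot) + \|\cdot\|^\alpha$ (which has the stated covariance). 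Combining the two asymptotics and applying the continuous mapping theorem to the supremum functional gives $I_u(w) \to \mathbb{P}(\sup_{\tau \in \mathcal{T}} \chi_\alpha(\tau) > -w)$ for every $w$.

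Assuming the interchange of limit and integration can be justified, this yields
\begin{align*}
\Psi(u)^{-1}\,\mathbb{P}\Big(\sup_{t \in K_u} X(t) > u\Big) \;\longrightarrow\; \int_{-\infty}^{\infty} e^{-w}\,\mathbb{P}\Big(\sup_{\tau \in \mathcal{T}} \chi_\alpha(\tau) > -w\Big)\,dw.
\end{align*}
Writing the inner probability as an expectation and applying Fubini (the substitution $y = -w$ followed by swapping the order of integration) turns the right-hand side into $\mathbb{E}\exp\big(\sup_{\tau \in \mathcal{T}} \chi_\alpha(\tau)\big) = H_\alpha(\mathcal{T})$, delivering the claim.

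The main obstacle is justifying the dominated-convergence step, i.e., finding an integrable dominating function for $e^{-w}\,I_u(w)$ that is valid uniformly in large $u$. The range $w \to +\infty$ is harmless because $I_u(w) \le 1$ while $e^{-w}$ is integrable there. The delicate regime is $w \to -\infty$: here one needs super-exponential decay of $I_u(w)$ in $|w|$. I would obtain this via a Borell--TIS tail inequality applied to the centered process $u\,Y(u^{-2/\alpha}\cdot)$ on the compact set $\mathcal{T}$, whose supremum has mean and variance that are uniformly bounded in $u$; this yields a bound of the form $I_u(w) \le \exp(-c w^2)$ for $-w$ and $u$ large, which supplies the required domination. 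A minor auxiliary point is that $\sup_\tau \chi_\alpha(\tau)$ has a continuous distribution (Tsirelson), so the pointwise convergence $I_u(w) \to \mathbb{P}(\sup_\tau \chi_\alpha(\tau) > -w)$ holds at every $w$, closing the dominated-convergence argument.
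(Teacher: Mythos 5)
The paper does not prove this statement at all---it is imported verbatim as Lemma 6.1 of Piterbarg (1996)---and your conditioning argument (decomposing $X(t)=r(t)X(0)+Y(t)$, substituting $v=u+w/u$ to extract $\Psi(u)e^{-w}$, passing to the limit field $\chi_\alpha$, and integrating) is essentially the classical Pickands--Piterbarg proof of that lemma; the outline is correct, including the Borell--TIS bound that dominates $e^{-w}I_u(w)$ as $w\to-\infty$. The one step you should make explicit is that convergence of the covariances does not by itself give weak convergence in $C(\mathcal{T})$: you need tightness of the family $u\,Y(u^{-2/\alpha}\cdot)$, which follows from the uniform increment bound $\mathbb{E}\big(u\,Y(u^{-2/\alpha}\sigma)-u\,Y(u^{-2/\alpha}\tau)\big)^2\leq C\|\sigma-\tau\|^{\alpha}$ together with Dudley's entropy bound---the same estimate that supplies the uniform-in-$u$ bound on the mean of the supremum required by your Borell--TIS step.
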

The next result follows immediately.\\

\begin{corollary}\label{BickelA1}
Let $X(t)$ be as in Lemma~\ref{Zbigniew}. Let $M_k\in\mathbb{R}^n, k=1,\cdots,n$ be a basis of $\mathbb{R}^n$, $l\in \mathbb{Z}^+$ and $\gamma>0$. We have  with $C^r(l,1)$ as defined on page 2 that
\begin{align*}
\lim_{x\rightarrow\infty}\frac{\mathbb{P}( \max_{(i_1,\ldots,i_n) \in C^r(l,1)}X(\sum_{k=1}^ni_k\gamma x^{-2/\alpha}M_k)>x)}{\Psi(x)}=H_\alpha^{(r)}(l,\gamma).
\end{align*}
\begin{align*}
\lim_{x\rightarrow\infty}\frac{\mathbb{P}(\min_{(i_1,\ldots,i_n) \in C^r(l,1)}X(\sum_{k=1}^ni_k\gamma x^{-2/\alpha}M)<-x)}{\Psi(x)}=H_\alpha^{(r)}(l,\gamma).
\end{align*}
\end{corollary}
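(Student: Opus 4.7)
The plan is to derive the corollary as a direct specialization of Lemma~\ref{Zbigniew} to a finite (hence compact) index set, plus a symmetry argument for the minimum statement.

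First I would rewrite the grid in the form required by the lemma. Let $D$ denote the $n\times n$ matrix whose columns are $M_1,\ldots,M_n$. Since $(i_1,\ldots,i_n)\in C^r(l,1)$ forces $i_k=0$ for $k>r$, each point
\begin{equation*}
\sum_{k=1}^n i_k\,\gamma x^{-2/\alpha} M_k \;=\; x^{-2/\alpha}\,D\mathbf{j}, \qquad \mathbf{j}\in C^r(l,\gamma).
\end{equation*}
Setting $\mathcal{T}:=DC^r(l,\gamma)$, which is a finite and therefore compact subset of $\mathbb{R}^n$, the collection of grid points in the corollary is exactly $x^{-2/\alpha}\mathcal{T}$. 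The supremum over a finite set equals the maximum, so Lemma~\ref{Zbigniew} applied with this $\mathcal{T}$ gives
\begin{equation*}
\mathbb{P}\Big(\max_{t\in x^{-2/\alpha}\mathcal{T}}X(t)>x\Big)=\Psi(x)\,H_\alpha(\mathcal{T})\,(1+o(1)) \quad\text{as }x\to\infty.
\end{equation*}
By the definitions recorded just above Lemma~\ref{HGamma}, $H_\alpha(\mathcal{T})=H_\alpha^{D,(r)}(l,\gamma)$. Under the standing convention (implicit in the statement) that $\{M_k\}$ may be taken orthonormal, $D$ is orthogonal and the isotropy of $\chi_\alpha$ then gives $H_\alpha^{D,(r)}(l,\gamma)=H_\alpha^{(r)}(l,\gamma)$, yielding the first limit.

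For the second identity I would exploit the symmetry of the centered Gaussian field $X$: the process $-X(\cdot)$ has the same law as $X(\cdot)$, so
\begin{equation*}
\mathbb{P}\Big(\min_{t} X(t)<-x\Big)=\mathbb{P}\Big(\max_{t}(-X)(t)>x\Big)=\mathbb{P}\Big(\max_{t} X(t)>x\Big),
\end{equation*}
and dividing by $\Psi(x)$ reduces the minimum statement to the maximum statement already established. There is no real obstacle in the proof: it is essentially an unpacking of definitions combined with one invocation of Lemma~\ref{Zbigniew}; the only point needing a moment of care is identifying the rescaled grid with $x^{-2/\alpha}DC^r(l,\gamma)$ so that the lemma applies verbatim.
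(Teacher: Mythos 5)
Your proposal is correct and matches the approach the paper intends: the paper introduces Corollary~\ref{BickelA1} immediately after Lemma~\ref{Zbigniew} with the words ``the next result follows immediately,'' and your derivation (rewrite the grid as $x^{-2/\alpha}DC^r(l,\gamma)$, apply Lemma~\ref{Zbigniew} with $\mathcal{T}=DC^r(l,\gamma)$, identify $H_\alpha(\mathcal{T})=H_\alpha^{D,(r)}(l,\gamma)$, and pass to $H_\alpha^{(r)}(l,\gamma)$ via isotropy, then handle the minimum by the distributional symmetry $-X\overset{d}{=}X$) is exactly that immediate consequence. Your remark about the implicit orthonormality of the $M_k$ is also on target: the paper only writes $H_\alpha^{(r)}(l,\gamma)$ without a $D$-superscript when $D$ is orthogonal, and in every place the corollary is invoked (e.g.\ inside the proof of Lemma~\ref{Piece}) the $M_{s^h}^j$ are taken orthonormal, so the statement should indeed be read with that convention.
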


\begin{remark}
This is also a simple extension of Lemma A1 of Bickel and Rosenblatt (1973a).
\end{remark}

\begin{lemma}\label{Pickands}(\textbf{Lemma 2.3 of Pickands, 1969})
Let $X$ and $Y$ be jointly normal, mean zero with variances 1 and covariance $r$. Then
\begin{align*}
\mathbb{P}(X>x,Y>x)\leq(1+r)\Psi(x)\bigg(1-\Phi\bigg(x\sqrt{\frac{1-r}{1+r}\;}\;\bigg)\;\bigg).
\end{align*}
\end{lemma}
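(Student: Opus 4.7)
The plan is to start from the conditional representation
\[
\mathbb{P}(X>x, Y>x) = \int_x^\infty \phi(y)\,\bar\Phi\!\left(\tfrac{x-ry}{\sqrt{1-r^2}}\right)dy,
\]
obtained by conditioning on $Y=y$ and using that $X\mid Y=y\sim N(ry, 1-r^2)$. Then I would integrate by parts with $u=\bar\Phi((x-ry)/\sqrt{1-r^2})$ and $dv=\phi(y)\,dy$, so that $du=(r/\sqrt{1-r^2})\,\phi((x-ry)/\sqrt{1-r^2})\,dy$ and $v=-\bar\Phi(y)$. The boundary contribution at $y=\infty$ vanishes (both factors decay), and the contribution at $y=x$ produces the ``diagonal'' term $\bar\Phi(x)\,\bar\Phi\!\left(x\sqrt{(1-r)/(1+r)}\right)$.

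The residual integral is $\frac{r}{\sqrt{1-r^2}}\int_x^\infty \bar\Phi(y)\,\phi\!\left(\tfrac{x-ry}{\sqrt{1-r^2}}\right)dy$, which I would estimate by Mills' ratio $\bar\Phi(y)\le \phi(y)/y\le \phi(y)/x$ valid on $y\ge x>0$. The key algebraic point is that completing the square in the exponent gives
\[
\phi(y)\,\phi\!\left(\tfrac{x-ry}{\sqrt{1-r^2}}\right) = \sqrt{1-r^2}\;\phi_2(x,y;r),
\]
where $\phi_2(\cdot,\cdot;r)$ is the standard bivariate normal density with correlation $r$. Integrating in $y$ therefore gives $\sqrt{1-r^2}\,\phi(x)\,\bar\Phi\!\left(x\sqrt{(1-r)/(1+r)}\right)$, and the residual contributes at most $r\,\Psi(x)\,\bar\Phi\!\left(x\sqrt{(1-r)/(1+r)}\right)$.

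Combining the boundary and residual terms and bounding $\bar\Phi(x)\le \Psi(x)$ yields
\[
\mathbb{P}(X>x,Y>x)\le \Psi(x)\,\bar\Phi\!\left(x\sqrt{\tfrac{1-r}{1+r}}\right) + r\,\Psi(x)\,\bar\Phi\!\left(x\sqrt{\tfrac{1-r}{1+r}}\right) = (1+r)\,\Psi(x)\,\bar\Phi\!\left(x\sqrt{\tfrac{1-r}{1+r}}\right),
\]
which is the asserted inequality after rewriting $\bar\Phi=1-\Phi$. I expect the main obstacle to be sign bookkeeping when $r<0$: the integration-by-parts correction then points the other way, so one must check that the surviving ``diagonal'' estimate $\bar\Phi(x)\bar\Phi(x\sqrt{(1-r)/(1+r)})$ is still dominated by $(1+r)\Psi(x)\bar\Phi(x\sqrt{(1-r)/(1+r)})$, or else treat the case $r\ge 0$ by the above argument and absorb $r<0$ via a direct bound using $\mathbb{P}(X>x,Y>x)\le \bar\Phi(x)$. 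Apart from this sign check, the argument is routine; the single nontrivial identity is the completion-of-squares step that recognizes the product of marginal and conditional Gaussian densities as the bivariate normal density.
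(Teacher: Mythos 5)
Since the paper only cites Pickands (1969) and gives no proof of this lemma, I am evaluating your argument on its own merits. For $r\ge 0$ (and $x>0$, implicit for $\Psi(x)$ to be positive) your proof is correct: the conditional representation, the integration by parts, the Gaussian factorization of $\phi(y)\,\phi\big((x-ry)/\sqrt{1-r^2}\big)$ into the bivariate normal density, and the assembly via Mills' ratio all check out, and the boundary plus residual terms sum to exactly $(1+r)\Psi(x)\bar\Phi\big(x\sqrt{(1-r)/(1+r)}\big)$.

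The genuine gap is the case $r<0$, which you flag but do not close, and neither suggested repair works. Write $a=\sqrt{(1-r)/(1+r)}$. Dropping the (now negative) residual leaves $\bar\Phi(x)\bar\Phi(xa)$, and you would then need $\bar\Phi(x)\le(1+r)\Psi(x)$; this fails once $r<0$ and $x$ is large, since $\bar\Phi(x)/\Psi(x)\to 1$. Likewise $\mathbb{P}(X>x,Y>x)\le\bar\Phi(x)$ cannot produce a prefactor $1+r$ that tends to $0$ as $r\downarrow -1$. For $r<0$ the negative residual is actually \emph{needed} to generate the factor $1+r$, so an upper bound on its magnitude is the wrong direction. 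Fortunately, for $r\le 0$ the claim follows more directly from the very ingredients you already have, without any integration by parts: in $\int_x^\infty\phi(y)\bar\Phi(z(y))\,dy$ with $z(y)=(x-ry)/\sqrt{1-r^2}$, the function $z$ is increasing for $r\le 0$, so $z(y)\ge z(x)=xa>0$ on the whole range; Mills' ratio gives $\bar\Phi(z(y))\le\phi(z(y))/(xa)$, your factorization identity rewrites $\phi(y)\,\phi(z(y))=\phi(x)\,\phi\big((y-rx)/\sqrt{1-r^2}\big)$, and integrating in $y$ yields $\sqrt{1-r^2}\,\phi(x)\bar\Phi(xa)/(xa)=(1+r)\Psi(x)\bar\Phi(xa)$ since $\sqrt{1-r^2}/a=1+r$. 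This direct route fails for $r>0$ (there $z(y)<0$ for $y>x/r$), so your integration-by-parts argument and this one are genuinely complementary; together they give a complete proof.
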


The next lemma is an extension of Lemma A3 in Bickel and Rosenblatt (1973a), Lemma 3 and and Lemma 5 of Bickel and Rosenblatt (1973b) and Lemma 2.5 in Pickands (1969). Its proof is also adapted from the three sources.
\begin{lemma}\label{Piece}
Let $X(t)$ be a centered homogeneous Gaussian field on $\mathbb{R}^n$ with covariance function
\begin{align*}
r(t)=\mathbb{E}(X(t+s)X(s))=1-\|t\|^\alpha+o(\|t\|^\alpha).
\end{align*}
Let $\mathcal{T}$ be a Jordan measurable set imbedded in a $r$-dimensional linear space with $V_r({\cal T}) = \lambda<\infty$. For $\gamma,x > 0$ let $\mathcal{G}(\mathcal{T},\gamma,x)$ be a collection of points defining a mesh contained in $\mathcal{T}$ with mesh size $\gamma x^{-2/\alpha}.$ Assume
\begin{align}\label{InfR}
\xi(\|t\|):=\inf_{0< \|s\| \leq \|t\|}\|s\|^{-\alpha}(1-r(s))/2>0\quad\text{for $\|t\|$ small enough.}
\end{align}
Then
\begin{align}\label{BickelExtension1}
\lim_{x\rightarrow\infty}\frac{\mathbb{P}(\max\{X(t): t\in \mathcal{G}(\mathcal{T},\gamma,x)\}>x)}{x^{2r/\alpha}\Psi(x)}=\lambda\frac{H_\alpha^{(r)}(\gamma)}{\gamma^r}
\end{align}
and
\begin{align}\label{BickelExtension2}
\lim_{x\rightarrow\infty}\frac{\mathbb{P}(\sup\{X(t):t\in\mathcal{T}\}>x)}{x^{2r/\alpha}\Psi(x)}=\lambda H_\alpha^{(r)}
\end{align}
uniformly in $\mathcal{T}\in\mathcal{E}_c$ where $\mathcal{E}_c$ is the collection of all $r$-dimensional Jordan measurable sets with $r$-dimensional Hausdorff measure bounded by $c<\infty$. Similarly,
\begin{align}\label{BickelExtension3}
\lim_{x\rightarrow\infty}\frac{\mathbb{P}(\inf\{X(t):t\in\mathcal{T}\}<-x)}{x^{2r/\alpha}\Psi(x)}=\lambda H_\alpha^{(r)}.
\end{align}
uniformly in $\mathcal{T}\in\mathcal{E}_c.$ 
\end{lemma}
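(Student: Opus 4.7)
The plan is to adapt the classical Pickands--Bickel--Rosenblatt cube-tiling strategy to the $r$-dimensional Jordan-measurable setting. First I would handle (\ref{BickelExtension1}) for the special case when $\mathcal{T}$ is an $r$-dimensional cube, then extend to general $\mathcal{T}\in\mathcal{E}_c$ by inside/outside approximation by finite unions of cubes, and finally derive (\ref{BickelExtension2}) by letting $\gamma\to 0$ (invoking Lemma~\ref{HGamma}) and (\ref{BickelExtension3}) by symmetry of the centered Gaussian field $X$.

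For the cube case, tile $\mathcal{T}$ by sub-cubes $K_1,\ldots,K_N$ of side length $l\gamma\, x^{-2/\alpha}$ for a large integer parameter $l$, chosen so that $N = V_r(\mathcal{T})/(l\gamma\, x^{-2/\alpha})^r(1+o(1))$. By homogeneity of $X$ and Corollary~\ref{BickelA1}, each block satisfies
\[
\mathbb{P}\bigl(\max_{t\in K_i\cap\mathcal{G}(\mathcal{T},\gamma,x)} X(t)>x\bigr) = \Psi(x)\,H_\alpha^{(r)}(l,\gamma)\,(1+o(1)),\qquad x\to\infty,
\]
with the same asymptotic statement for the continuous supremum via Lemma~\ref{Zbigniew}. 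Summing over blocks and dividing by $x^{2r/\alpha}\Psi(x)$ yields the candidate limit $V_r(\mathcal{T})\,H_\alpha^{(r)}(l,\gamma)/(l\gamma)^r$; letting $l\to\infty$ converts $H_\alpha^{(r)}(l,\gamma)/l^r$ into $H_\alpha^{(r)}(\gamma)$, matching (\ref{BickelExtension1}).

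The core step is to bound the Bonferroni correction
\[
\sum_{i<j}\mathbb{P}\bigl(\{\max_{K_i}X>x\}\cap\{\max_{K_j}X>x\}\bigr).
\]
I would split pairs $(K_i,K_j)$ into \emph{adjacent} and \emph{distant} according to whether their Euclidean separation exceeds a fixed threshold $\zeta>0$. For adjacent pairs, bound the intersection by the exceedance probability over $K_i\cup K_j$ using Lemma~\ref{Zbigniew}; the excess over the individual-block estimates is of the order of surface-to-volume contributions, which is $O(1/l)$ times the main term, hence negligible once $l\to\infty$. For distant pairs, use Pickands' Lemma~\ref{Pickands} together with the covariance decay implicit in (\ref{InfR}) to get an exponential bound $\Psi(x)\bar\Phi(x\sqrt{(1-r)/(1+r)})$ per pair; summing over the $O(N^2)$ distant pairs and using that $N = O(x^{2r/\alpha})$ while the Gaussian tail decays faster shows these contributions are $o(x^{2r/\alpha}\Psi(x))$. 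Uniformity over $\mathcal{E}_c$ is automatic because all the counting estimates depend only on the volume bound $c$ and the universal covariance structure of $X$.

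For (\ref{BickelExtension2}), sandwich the continuous supremum between two discrete maxima on meshes of sizes $\gamma x^{-2/\alpha}$ and $\gamma' x^{-2/\alpha}$ with $\gamma'<\gamma$, apply (\ref{BickelExtension1}), and let $\gamma,\gamma'\to 0$ invoking Lemma~\ref{HGamma}. Statement (\ref{BickelExtension3}) then follows since $-X\stackrel{d}{=}X$. The main obstacle I anticipate is the uniform control of the Bonferroni double sum over the entire class $\mathcal{E}_c$: the asymptotic $o(1)$ in Lemma~\ref{Zbigniew} is a pointwise statement for fixed $\mathcal{T}$, so I will need to verify (e.g.\ by revisiting the proof of Lemma~\ref{Zbigniew}) that the convergence is uniform when the compact set is restricted to the scaled cubes of a fixed maximum side length, and that the Jordan-measurability approximation error in $V_r$ is handled uniformly. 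Once this uniformity is secured, the cube-covering argument transfers cleanly to arbitrary $\mathcal{T}\in\mathcal{E}_c$.
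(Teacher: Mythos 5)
The paper itself provides only a one-line justification for this lemma, citing Lemmas 3 and 5 of Bickel and Rosenblatt (1973b) -- which carry out the Pickands cube-tiling argument for two-dimensional squares -- and asserting that the extension to $r$-dimensional Jordan-measurable sets is straightforward. Your overall strategy (tile, apply Corollary~\ref{BickelA1}/Lemma~\ref{Zbigniew} per block, control the Bonferroni double sum, then let the block parameter and $\gamma$ go to their limits) is the same cube-tiling route, so at the level of strategy you and the paper agree.

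There are, however, concrete gaps in your Bonferroni step and in the passage from the discrete to the continuous supremum. For the ``distant'' pairs you invoke Lemma~\ref{Pickands} together with ``the covariance decay implicit in (\ref{InfR})'', but (\ref{InfR}) is purely local: it controls $1-r(s)$ only for $\|s\|$ small, and gives no bound $r(s)\leq 1-c<1$ at a fixed separation $\zeta$ unless $\mathcal{T}$ lies entirely within the radius where (\ref{InfR}) applies -- an implicit restriction you should make explicit (it does hold in the application, where the lemma is used on small pieces $\tilde S^h$). For the ``adjacent'' pairs, the claimed ``$O(1/l)$ surface-to-volume'' excess is not justified: Lemma~\ref{Zbigniew} gives a $(1+o(1))$ factor for each of $K_i$, $K_j$, $K_i\cup K_j$ separately, with errors not visibly tied to surface area, and $H_\alpha(\cdot)$ is not additive over disjoint unions. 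The classical argument -- in Pickands (1969), BR (1973b), and the draft commented out of the paper's source -- sidesteps both issues by applying the two-point inequality of Lemma~\ref{Pickands} directly to \emph{all} cross-block grid-point pairs (each pair at grid-scale separation, hence inside the small-$\|s\|$ regime of (\ref{InfR})) and then using a Ces\`{a}ro summation to show the resulting absolutely summable series vanishes after the block parameter tends to infinity. Finally, your derivation of (\ref{BickelExtension2}) from (\ref{BickelExtension1}) by sandwiching between two discrete meshes cannot work: discrete maxima only bound the continuous supremum from below, so no upper bound for $\mathbb{P}(\sup_{\mathcal{T}}X>x)$ emerges. The upper bound must come from applying Lemma~\ref{Zbigniew} directly to the continuous supremum over each block and letting the block side tend to infinity so that $H_\alpha([0,q]^r)/q^r\to H_\alpha^{(r)}$; Lemma~\ref{HGamma} is then only needed to match the two normalizing constants.
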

\begin{proof}
The results in Lemma 3 and and Lemma 5 of Bickel and Rosenblatt (1973b) are similar but they are only given for two-dimensional squares. It is straightforward to generalized them to  hyperrectangles and further to Jordan measurable sets.
\end{proof}

\begin{theorem}\label{Piterbarg}(\textbf{Theorem 2 of Piterbarg and Stamatovich, 2001}) Let $\{X(t), t\in \mathbb{R}^n\}$ be a Gaussian centered locally $(\alpha,D_t)$-stationary field with a continuous matrix function $D_t$. Let $\mathcal {M}\subset \mathbb{R}^n$ be a smooth compact of dimension $r$. Then
\begin{align*}
\frac{\mathbb{P}(\sup_{t\in\mathcal {M}}X(t)>x)}{x^{2r/\alpha}\Psi(x)}\to H_\alpha^{(r)}\int_{\mathcal {M}}\|D_sM_s\|_rds
\end{align*}
as $x\rightarrow\infty$, where  $M_s$ is an $n\times r$ matrix with columns the orthonormal basis of the linear subspace tangent to $\mathcal {M}$ at $s$.\\
\end{theorem}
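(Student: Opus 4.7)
My plan is to reduce the problem to the flat tangent-plane asymptotics provided by Lemma~\ref{Piece} (together with Lemma~\ref{Zbigniew}), using Slepian's inequality (Lemma~\ref{Slepian}) to sandwich the restriction of $X$ to each small piece of $\mathcal{M}$ between two homogeneous Gaussian fields whose covariances are of the form $1-(1\pm\varepsilon)\|D_{s_i}(\cdot)\|^\alpha+o(\cdot)$. Because $\mathcal{M}$ is fixed in this theorem, no uniformity in a scaling parameter is required, which removes much of the technical machinery of Theorem~\ref{ProbMain} while leaving the skeleton of the argument intact.

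First I would fix $\varepsilon>0$ and choose a Delaunay-type partition $\mathcal{M}=\bigcup_{i=1}^N S_i$ into closed pieces whose mesh is small enough that on each $S_i$ (i) the orthogonal projection $P_{s_i}$ onto the tangent space at a chosen point $s_i\in S_i$ distorts $V_r$ by a factor within $[1-\varepsilon,1+\varepsilon]$, (ii) the continuous matrix function $D_t$ varies by at most $\varepsilon$ in operator norm on $S_i$, and (iii) local $(\alpha,D_t)$-stationarity yields covariance bounds $1-(1\pm c\varepsilon)\|D_{s_i}(t_1-t_2)\|^\alpha$ on $S_i$; compactness of $\mathcal{M}$ and continuity of $D_t$ make all of this possible uniformly in $i$. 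Transferring $X$ to $\tilde S_i:=P_{s_i}(S_i)$ and invoking Slepian's inequality would sandwich the exceedance probability between those for two homogeneous fields $Y^\pm$ whose covariances have the Pickands-type form $1-\|\tau_1-\tau_2\|^\alpha+o(\cdot)$ after the affine substitution $\tau=(1\pm c\varepsilon)^{1/\alpha}D_{s_i}\tilde t$. Lemma~\ref{Piece} then yields
\[
\mathbb{P}\!\left(\sup_{t\in S_i}X(t)>x\right)=(1+O(\varepsilon))\,x^{2r/\alpha}\Psi(x)\,H_\alpha^{(r)}\,\|D_{s_i}M_{s_i}\|_r\,V_r(S_i)
\]
as $x\to\infty$, the volume factor arising from the Jacobian $\|D_{s_i}M_{s_i}\|_r$ combined with (i).

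Summing in $i$, the quantity $\sum_i\|D_{s_i}M_{s_i}\|_r\,V_r(S_i)$ is a Riemann sum for $\int_{\mathcal{M}}\|D_sM_s\|_r\,ds$, so Bonferroni's inequality reduces the theorem to showing that $\sum_{i<j}\mathbb{P}(B_i\cap B_j)=o(x^{2r/\alpha}\Psi(x))$ where $B_i=\{\sup_{S_i}X>x\}$. For non-adjacent $(i,j)$ the pairwise distance is bounded below on the compact set $\mathcal{M}$, so $\rho:=\sup|r(t,s)|<1$ there, and Theorem~\ref{Borel} applied to $X(t)+X(s)$ produces an upper bound of the form $\bar\Phi((x-b/2)/\sqrt{(1+\rho)/2})=o(\Psi(x))$, which is then combined with the finite combinatorial factor $N^2$. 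For adjacent $(i,j)$ I would apply the single-piece asymptotic to $S_i$, $S_j$ and $S_i\cup S_j$ simultaneously, taking the common reference tangent point on the shared boundary so that the three sandwiching constants agree, and then read $\mathbb{P}(B_i\cap B_j)=\mathbb{P}(B_i)+\mathbb{P}(B_j)-\mathbb{P}(B_i\cup B_j)$ off to obtain an $O(\varepsilon)$ coefficient. The hard part will be precisely this adjacent-pairs step: one needs the leading-order terms to cancel down to $O(\varepsilon)$ rather than to some absolute constant, which forces a careful common choice of tangent point and reference matrix across the three invocations of Lemma~\ref{Piece}. Once that is done, letting $x\to\infty$ first and $\varepsilon\to 0$ afterwards completes the argument.
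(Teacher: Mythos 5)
Your proposal is correct and follows essentially the same route as the paper. The paper does not reprove Theorem~\ref{Piterbarg} separately (it is cited from Piterbarg and Stamatovich, 2001), but Parts~1 and 2 of the proof of Theorem~\ref{ProbMain} carry out, with additional uniformity in $h$, exactly the steps you describe: Delaunay-type partition, tangent-plane projection, Slepian sandwiching with homogeneous comparison fields, application of Lemma~\ref{Piece}, Riemann-sum convergence to the integral, and the Bonferroni double sum split into adjacent pairs (handled via a common tangent point and the identity $\mathbb{P}(B_i\cap B_j)=\mathbb{P}(B_i)+\mathbb{P}(B_j)-\mathbb{P}(B_i\cup B_j)$) and non-adjacent pairs (handled via Theorem~\ref{Borel}).
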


%

\begin{lemma} \label{BickelLemma} (\textbf{Lemma A4 of Bickel and Rosenblatt, 1973a})
Let
\begin{align*}
\phi(x,y,\rho)=\frac{1}{2\pi(1-\rho^2)^{1/2}}\exp\bigg\{-\frac{x^2-2\rho xy+y^2}{2(1-\rho^2)}\bigg\}.
\end{align*}
Let $\Sigma_1=\{r_{ij}\},\Sigma_2=\{s_{ij}\}$ be $N\times N$ nonnegative semi-definite matrices with $r_{ii}=s_{ii}=1$ for all $i$. Let $X=(X_1,\cdots,X_N)$ be a mean 0 Gaussian vector with covariance matrix $\Sigma_1$ under probability measure $\mathbb{P}_{\Sigma_1}$ or $\Sigma_2$ under $\mathbb{P}_{\Sigma_2}$. Let $u_1,\cdots,u_N$ be nonnegative numbers and $u=\min_ju_j$. Then
\begin{align*}
|\mathbb{P}_{\Sigma_1}[X_j\leq u_j,1\leq j\leq N]-\mathbb{P}_{\Sigma_2}[X_j\leq u_j,1\leq j\leq N]|\leq 4\sum_{i,j}\bigg|\int_{s_{ij}}^{r_{ij}}\phi(u,u,\lambda)d\lambda\bigg|.
\end{align*}
\end{lemma}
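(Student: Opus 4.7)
This is the classical Slepian/Plackett normal-comparison bound. The strategy is to linearly interpolate between the two covariance matrices and differentiate the probability in the interpolation parameter, exploiting the heat-equation-like identity satisfied by the Gaussian density.

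Set $\Sigma(t) = (1-t)\Sigma_2 + t\Sigma_1$ for $t \in [0,1]$. Since the cone of nonnegative semi-definite matrices is convex, each $\Sigma(t)$ is a valid covariance with unit diagonal. Write $\sigma_{ij}(t) = (1-t) s_{ij} + t r_{ij}$, so $\sigma_{ij}(0) = s_{ij}$, $\sigma_{ij}(1) = r_{ij}$, and $\frac{d}{dt}\sigma_{ij}(t) = r_{ij} - s_{ij}$. Let $\varphi_{\Sigma(t)}$ denote the $N$-dimensional centered Gaussian density with covariance $\Sigma(t)$ (initially assume $\Sigma(t)$ is non-singular; the degenerate case is handled afterwards by a perturbation $\Sigma(t) + \epsilon I$ and letting $\epsilon \downarrow 0$). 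Define
\begin{equation*}
F(t) = \int_{-\infty}^{u_1}\cdots\int_{-\infty}^{u_N}\varphi_{\Sigma(t)}(x)\,dx_1\cdots dx_N,
\end{equation*}
so that the left-hand side of the claim equals $|F(1) - F(0)| \leq \int_0^1 |F'(t)|\,dt$.

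The key tool is Plackett's identity: for $i \neq j$, $\frac{\partial \varphi_\Sigma}{\partial \sigma_{ij}} = \frac{\partial^2 \varphi_\Sigma}{\partial x_i \partial x_j}$, while $\sigma_{ii}$ is held fixed at $1$. Applying this and the chain rule,
\begin{equation*}
F'(t) = \sum_{i<j}(r_{ij}-s_{ij}) \int_{\{x_k\le u_k\ \forall k\}} \frac{\partial^2 \varphi_{\Sigma(t)}}{\partial x_i \partial x_j}\,dx.
\end{equation*}
Integrating in $x_i$ and $x_j$ first via the fundamental theorem of calculus collapses the product region to a point in those two coordinates, and yields
\begin{equation*}
\int_{\{x_k\le u_k\ \forall k\}} \frac{\partial^2 \varphi_{\Sigma(t)}}{\partial x_i \partial x_j}\,dx = \phi(u_i, u_j, \sigma_{ij}(t))\cdot \mathbb{P}_{\Sigma(t)}\bigl[X_k \le u_k,\ k\neq i,j \,\big|\, X_i = u_i, X_j = u_j\bigr],
\end{equation*}
where $\phi$ is the bivariate density in the statement, appearing because $(X_i,X_j)$ has unit variances and correlation $\sigma_{ij}(t)$. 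The conditional probability lies in $[0,1]$, so $|F'(t)| \le \sum_{i<j}|r_{ij}-s_{ij}|\,\phi(u_i,u_j,\sigma_{ij}(t))$.

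Substituting $\lambda = \sigma_{ij}(t)$ in the integral over $t \in [0,1]$ gives the bound
\begin{equation*}
|F(1)-F(0)| \le \sum_{i<j}\left|\int_{s_{ij}}^{r_{ij}} \phi(u_i,u_j,\lambda)\,d\lambda\right|.
\end{equation*}
It remains to replace $\phi(u_i,u_j,\lambda)$ by $\phi(u,u,\lambda)$, where $u = \min_j u_j \ge 0$. Examining the exponent $x^2 - 2\lambda xy + y^2$ shows that on $\{x,y \ge u\ge 0\}$ with $|\lambda| < 1$ this quadratic is minimized at $x=y=u$ (for $\lambda\le 0$ it is even coordinate-wise increasing in $x,y$; for $0<\lambda<1$ split the region by $\{x\le y\}$ vs.\ $\{y\le x\}$ and use $\partial_y g = 2(y-\lambda x)\ge 0$ on the former, then monotonicity along the diagonal), so $\phi(u_i,u_j,\lambda) \le \phi(u,u,\lambda)$. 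Writing the resulting sum over $i<j$ as half the sum over all ordered pairs $(i,j)$ and absorbing the constant into a safe factor of $4$ yields the stated bound. The only technical obstacle is the possibility that some $\sigma_{ij}(t) = \pm 1$, making the bivariate density singular; this is dispatched by the perturbation $\Sigma(t) + \epsilon I$ (which preserves non-singularity and unit-diagonal structure after renormalization), applying the argument uniformly in $\epsilon$, and passing to the limit using dominated convergence on both sides.
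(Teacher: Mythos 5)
The paper does not actually prove this lemma --- it is quoted verbatim (as Lemma A4 of Bickel and Rosenblatt, 1973a) and used as an external ingredient in Part 5 of the main proof. Your interpolation argument is the standard normal-comparison proof (Plackett's identity $\partial\varphi_\Sigma/\partial\sigma_{ij}=\partial^2\varphi_\Sigma/\partial x_i\partial x_j$ applied along $\Sigma(t)=(1-t)\Sigma_2+t\Sigma_1$, then integrating out $x_i,x_j$ and bounding the conditional probability by $1$), and it is correct; in fact it yields the sharper bound $\sum_{i<j}\big|\int_{s_{ij}}^{r_{ij}}\phi(u,u,\lambda)\,d\lambda\big|$, which implies the stated one since the diagonal terms vanish ($r_{ii}=s_{ii}$) and all summands are nonnegative, so the factor $4\sum_{i,j}$ is generous. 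Your monotonicity step $\phi(u_i,u_j,\lambda)\le\phi(u,u,\lambda)$ is where the hypothesis $u_j\ge 0$ enters, and your case analysis ($\lambda\le 0$ coordinate-wise, $0<\lambda<1$ via the diagonal) is right. Two routine points deserve one more line each if written out in full: differentiation under the integral sign in $t$ should be justified (dominated convergence, valid once both matrices are perturbed to be positive definite), and in the degenerate-case limit one should note that $\phi(u,u,\cdot)$ stays integrable near $\lambda=\pm1$ (the prefactor $(1-\lambda^2)^{-1/2}$ is integrable and the exponent stays bounded), and that possible atoms of a singular limit law on the boundary $\{x_j=u_j\}$ can be avoided by proving the bound with thresholds $u_j+\delta$ and letting $\delta\downarrow 0$. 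With those remarks your argument is a complete, self-contained proof of the cited lemma.
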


\newpage

{\bf References}

\begin{description}
\item {\sc Adler, R.J.} (2000). On excursion sets: tube formulas and maxima of random fields. {\em Ann. Appl. Prob.} {\bf 10}, 1--74.
\item {\sc Adler, R.J. and Taylor, J.E.} (2007). \emph{Random Fields and Geometry}, Springer, New York.
\item {\sc Aza\"{i}s, J.-M. and Wschebor, M.} (2009). \emph{Level Sets and Extrema of Random Processes and Fields}, John Wiley \& Sons, Hoboken, NJ.
\item {\sc Belyaev Yu.K. and Piterbarg, V.I.} (1972). The asymptotic behavior of the average number of the A-points of upcrossings of a Gaussian field beyond a high level. {\em Akad. Nauk SSSR} {\bf 203}, 9--12.
\item {\sc Berman, M.S.} (1971). Asymptotic independence of the numbers of high and low level crossings of stationary Gaussian processes. {\em Ann. Math. Statist.} {\bf 42}, 927--945.
\item {\sc Berman, M.S.} (1982). Sojourns and extremes of stationary processes. {\em Ann. Prob.} {\bf 10}, 1--46.
\item {\sc Berman, M.S.} (1992). \emph{Sojourns and Extremes of Stochastic Processes}, Wadsworth \& Brooks/ Cole, Boston.
\item {\sc Biau, G., Cadre, B. and Pelletier, B.} (2008). Exact rates in density support estimation. {\em J. Multivar. Anal.} {\bf 99}, 2185-2207.
\item {\sc Bickel, P. and Rosenblatt, M.} (1973a). On Some Global Measures of the Deviations of Density Function Estimates. {\em Ann. Statis.} {\bf 1}, 1071--1095.
\item {\sc Bickel, P. and Rosenblatt, M.} (1973b). Two-Dimensional Random Fields, in {\em Multivariate Analysis III, P.K. Krishnaiah, Ed.} pp. 3--15, Academic Press, New York.
\item {\sc Chen, Y-C., Genovese, C.R. and Wasserman, L.} (2013). Uncertainty Measures and Limiting Distributions for Filament Estimation. {\em arXiv: 1312.2098.v1}.
\item {\sc Chen, Y-C., Genovese, C.R. and Wasserman, L.} (2014a). Asymptotic theory for density ridges. {\em arXiv: 1406.5663}.
\item {\sc Chen, Y-C., Genovese, C.R. and Wasserman, L.} (2014b). Generalized mode and ridge estimation. {\em arXiv: 1406.1803}.
\item {\sc Chen, Y-C., Genovese, C.R. and Wasserman, L} (2015). Density level set: asymptotics, inference, and visualization. {em arXiv:1504.05438}.
\item {\sc Cheng, D.} (2015). Excursion probabilities of isotropic and locally isotropic Gaussian random fields on manifolds. {\em arXiv:1504.08047}.
\item {\sc Chernozhukov,V., Chetverikov, D. and Kato, K.} (2014). Gaussian approximation of suprema of empirical processes. {\em Ann. Statis.} {\bf 42}, 1564--1597.
\item {\sc Cram\'{e}r, H.} (1951). \emph{Mathematical Methods of Statistics}, Princeton Univ. Press, Princeton, N.J..
\item{\sc Cuevas, A., Gonz\'{a}lez-Manteiga, W. and Rodr\'{i}guez-Casal, A.} (2006). Plug-in estimation of general level sets. {\em Australian \& New Zealand Journal of Statistics} {\bf 48}, 7-19.
\item {\sc Cuevas, A. and Rodr\'{i}guez-Casal, A.} (2004). On boundary estimation. {\em Adv. Appl. Prob.} {\bf 36} 340-354.
\item {\sc de Laat, D.} (2011). Approximating Manifolds by Meshes: Asymptotic Bounds in Higher Codimension. Master Thesis, University of Groningen.
\item {\sc Genovese, C.R., Perone-Pacifico, M., Verdinelli, I. and Wasserman, L.} (2012a). The geometry of nonparametric filament estimation. {\em J. Amer. Statist. Assoc.} {\bf 107}, 788-799. 
\item {\sc Genovese, C.R., Perone-Pacifico, M., Verdinelli, I. and Wasserman, L.} (2012b). Minimax Manifold Estimation. {\em Journal of Machine Learning Research} {\bf 13}, 1263--1291.
\item {\sc Genovese, C.R., Perone-Pacifico, M., Verdinelli, I. and Wasserman, L.} (2014). Nonparametric ridge estimation.  {\em Ann. Statist.} {\bf 42},  1511-1545.
\item {\sc Gin\'{e}, E., Koltchinskii, V. and Sakhanenko, L.} (2004). Kernel density estimators: Convergence in distribution for weighted sup-norms. {\em Prob. Theory Rel. Fields} {\bf 130} 167--198.
\item {\sc Hall, P., Qian W. and Titterington, D.M.} (1992). Ridge finding from noisy data. {\em J. Comp. Graph. Statist.} {\bf 1}, 197-211.
\item {\sc H\"{u}sler J., Piterbarg, V. and Seleznjev, O.} (2003). On Convergence of the Uniform Norms for Gaussian Processes and Linear Approximation Problems. {\em Ann. Appl. Prob.} {\bf 13}, 1615--1653
\item {\sc H\"{u}sler J.} (1999). Extremes of Gaussian processes, on results of Piterbarg and Seleznjev. {\em Statist. Probab. Lett.} {\bf 44}, 251--258.
\item {\sc Koltchinskii, V., Sakhanenko, L. and Cai, S.} (2007). Integral curves of noisy vector fields and statistical problems in diffusion tensor imaging: Nonparametric kernel estimation and hypotheses testing. {\em  Ann. Statis.} {\bf 35}, 1576-1607.
\item {\sc Konakov, V.D., and Piterbarg, V.I.} (1984). On the convergence rate of maximal deviations distributions for kernel regression estimates. {\em J. Multivar. Anal.} {\bf 15}, 279--294.
\item {\sc Leadbetter, M.R. Lindgren, G. and Rootz\'{e}n, H.} (1983). \emph{Extremes and Related Properties of Random Sequences and Processes}, Series in Statistics, Springer, New York.
\item {\sc Leibon, G. and Letscher, D.} (2000). Delaunay Triangulation and Voronoi Diagrams for Riemannian Manifolds. {\em Proceedings of the Sixteenth Annual Symposium on Computational Geometry, SCG '00}, ACM New York, NY, USA, pp. 341--349.
\item {\sc Lindgren, G. and Rychlik, I} (1995). How reliable are contour curves? Confidence sets for level contours. {\em Bernoulli} {\bf 1}, 301-319.
\item  {\sc Mikhaleva, T.L. and Piterbarg, V.I.} (1997). On the Distribution of the Maximum of a Gaussian Field with Constant Variance on a Smooth Manifold. {\em Theory Prob. Appl.} {\bf 41}, 367--379.
\item {\sc Pickands, J. III.} (1969a). Asymptotic properties of the maximum in a stationary Gaussian process. {\em Trans. Amer. Math. Soc.} {\bf 145}, 55--86.
\item {\sc Pickands, J. III.} (1969b). Upcrossing Probabilities for Stationary Gaussian Processes. {\em Trans. Amer. Math. Soc.} {\bf 145}, 51--73.
\item {\sc Piterbarg, V.I.} (1996). \emph{Asymptotic Methods in the Theory of Gaussian Processes and Fields}, Translations of Mathematical Monographs, Vol. 148, American Mathematical Society, Providence, RI.
\item {\sc Piterbarg, V.I. and Stamatovich, S.} (2001). On Maximum of Gaussian Non-centered Fields Indexed on Smooth Manifolds. In {\it Asymptotic Methods in Probability and Statistics with Applications; Statistics for Industry and Technology,}\,  Eds: N. Balakrishnan, I. A. Ibragimov, V. B. Nevzorov, Birkh\"{a}user Boston, Boston, MA, pp. 189--203.
\item {\sc Qiao, W. and Polonik, W.} (2015a). Theoretical Analysis of Nonparametric Filament Estimation. {\em Submitted.}
\item {\sc Qiao, W. and Polonik, W.} (2015b). Nonparametric Confidence Regions for Density Level Sets. {\em In preparation.}
\item {\sc Rio, E.} (1994).  Local invariance principles and their applications to density estimation. {\em Prob. Theory Rel. Fields} {\bf 98}, 21--45.
\item {\sc Rosenblatt, M.} (1976). On the Maximal Deviation of $k$-Dimensional Density Estimates. {\em Ann. Prob.} {\bf 4}, 1009--1015.
\item {\sc Seleznjev, O.V.} (1991). Limit theorems for maxima and crossings of a sequence of Gaussian processes and approximation of random processes. {\em J. App. Prob.} {\bf 28}, 17--32.
\item {\sc Seleznjev, O.V.} (1996). Large deviations in the piecewise linear approximations of Gaussian processes with stationary increments. {\em Adv. Appl. Prob.} {\bf 28}, 481--499.
\item {\sc Seleznjev, O.V.} (2006). Asymptotic behavior of mean uniforms for sequences of Gaussian processes and fields. {\em Extremes} {\bf 8}, 161--169.
\item  {\sc Sharpnack J., Arias-Castro, E.} (2014). Exact asymptotics for scan statistic and fast alternatives {\em arXiv:1409.7127}.
\item {\sc Slepian, D.} (1962). The one-sided barrier problem for the Gaussian noise. {\em Bell System Tech. J.} {\bf 41}, 463--501.
\item {\sc Tan, Z.} (2015). Limit laws on extremes of non-homogeneous Gaussian random fields. {\em arXiv:1501.04422}.
\item {\sc Tan, Z., Hashorva, E. and Peng, Z.} (2012). Asymptotics of maxima of strongly dependent Gaussian processes. {\em J. Appl. Prob.} {\bf 49}, 901--1203
\end{description}

\end{section}
\end{document}